\numberwithin{figure}{section}
\numberwithin{figure}{section}
\newtheorem{theorem}{Theorem}[section]
\newtheorem{lemma}[theorem]{Lemma}
\newtheorem{corollary}[theorem]{Corollary}
\theoremstyle{definition}
\newtheorem{definition}[theorem]{Definition}
\newtheorem{remark}[theorem]{Remark}
\numberwithin{equation}{section}
\newcommand{\norm}[1]{\left|\left|#1\right|\right|}
\def\vint_#1{\mathchoice%
          {\mathop{\kern 0.2em\vrule width 0.6em height 0.69678ex depth -0.58065ex
                  \kern -0.8em \intop}\nolimits_{\kern -0.4em#1}}%
          {\mathop{\kern 0.1em\vrule width 0.5em height 0.69678ex depth -0.60387ex
                  \kern -0.6em \intop}\nolimits_{#1}}%
          {\mathop{\kern 0.1em\vrule width 0.5em height 0.69678ex depth -0.60387ex
                  \kern -0.6em \intop}\nolimits_{#1}}%
          {\mathop{\kern 0.1em\vrule width 0.5em height 0.69678ex depth -0.60387ex
                  \kern -0.6em \intop}\nolimits_{#1}}}
\def\vintslides_#1{\mathchoice%
          {\mathop{\kern 0.1em\vrule width 0.5em height 0.697ex depth -0.581ex
                  \kern -0.6em \intop}\nolimits_{\kern -0.4em#1}}%
          {\mathop{\kern 0.1em\vrule width 0.3em height 0.697ex depth -0.604ex
                  \kern -0.4em \intop}\nolimits_{#1}}%
          {\mathop{\kern 0.1em\vrule width 0.3em height 0.697ex depth -0.604ex
                  \kern -0.4em \intop}\nolimits_{#1}}%
          {\mathop{\kern 0.1em\vrule width 0.3em height 0.697ex depth -0.604ex
                  \kern -0.4em \intop}\nolimits_{#1}}}
\newcommand{\kint}{\vint}
\newcommand{\aveint}[2]{\mathchoice%
          {\mathop{\kern 0.2em\vrule width 0.6em height 0.69678ex depth -0.58065ex
                  \kern -0.8em \intop}\nolimits_{\kern -0.45em#1}^{#2}}%
          {\mathop{\kern 0.1em\vrule width 0.5em height 0.69678ex depth -0.60387ex
                  \kern -0.6em \intop}\nolimits_{#1}^{#2}}%
          {\mathop{\kern 0.1em\vrule width 0.5em height 0.69678ex depth -0.60387ex
                  \kern -0.6em \intop}\nolimits_{#1}^{#2}}%
          {\mathop{\kern 0.1em\vrule width 0.5em height 0.69678ex depth -0.60387ex
                  \kern -0.6em \intop}\nolimits_{#1}^{#2}}}
\newcommand{\eps}{\varepsilon}
\newcommand{\R}{\mathbb{R}}
\renewcommand{\limsup}{\operatornamewithlimits{lim \, sup}}
\renewcommand{\liminf}{\operatornamewithlimits{lim \, inf}}
\newcommand{\esssup}{\operatornamewithlimits{ess\, sup}}
\newcommand{\spt}{\operatorname{spt}}
\renewcommand{\l}{\left}
\renewcommand{\r}{\right}
\newcommand{\parts}[2]{\frac{\partial {#1}}{\partial {#2}}}
\numberwithin{equation}{section}
\newcommand{\trm}{\textrm}
\newcommand{\half}{{\frac{1}{2}}}
\newcommand{\q}[1]{Q_{#1}}
\newcommand{\Om}{\Omega}
\newcommand{\limminus}{{\mathchoice{\raise.17ex\hbox{$\scriptstyle -$}}
                {\raise.17ex\hbox{$\scriptstyle -$}}
                {\raise.1ex\hbox{$\scriptscriptstyle -$}}
                {\scriptscriptstyle -}}}
\newcommand{\limplus}{{\mathchoice{\raise.17ex\hbox{$\scriptstyle +$}}
                {\raise.17ex\hbox{$\scriptstyle +$}}
                {\raise.1ex\hbox{$\scriptscriptstyle +$}}
                {\scriptscriptstyle +}}}
\newcommand{\abs}[1]{\left| #1 \right|}
\newcommand{\Lip}{\operatorname{Lip}}
\newcommand{\beq}{\begin{equation}}
\newcommand{\eeq}{\end{equation}}
\title[Local higher integrability of parabolic quasiminimizers]{Local higher integrability for parabolic quasiminimizers in metric spaces}
\author{M. Masson, M. Miranda Jr., F. Paronetto, and M. Parviainen}
\address{Mathias Masson, Aalto University,
Department of Mathematics and Systems Analysis,
P.O. Box 11100
FI-00076 Aalto, Finland}
\email{mathias.masson.finland@gmail.com}
\address{Michele Miranda Jr., Department of Mathematics and Computer Science, University of Ferrara via Machiavelli 35, 44121 Ferrara, Italy}
\email{michele.miranda@unife.it}
\address{Fabio Paronetto,
Dipartimento di Matematica, Universit`a degli Studi di Padova via Trieste 63, 35121 Padova, Italy
}
\email{fabio.paronetto@unipd.it}
\address{Mikko Parviainen,
Department of Mathematics and Statistics, P.O. Box 35 (MaD), FI-40014 University of Jyv\"askyl\"a, Finland
}
\email{mikko.j.parviainen@jyu.fi}
\thanks{}
\begin{document}
\subjclass[2010]{Primary 30L99, 35K92}

\keywords{Higher integrability, reverse Hölder inequality, parabolic quasiminima, Newtonian space, upper gradient, calculus of variations, nonlinear parabolic equations, analysis on metric spaces}

\begin{abstract}
Using variational methods, we prove local higher integrability for the minimal p-weak upper gradients of parabolic quasiminimizers  in metric measure spaces. We assume the measure to be doubling and the underlying space to be such that a weak $(1,2)$-Poincar\'e inequality is supported.

\end{abstract}
\maketitle

\section{Introduction}

In the Euclidean setting, finding a solution to the $p$-parabolic partial differential equation in $\Omega\times(0,T)$,
\begin{align}\label{p-parabolic equation}
-\frac{\partial u}{\partial t}+\textrm{div}(|\nabla u |^{p-2} \nabla u )=0,
\end{align}
can be formulated into an equivalent variational problem, which is to find a function $u$, such that  with $K=1$ we have
\begin{equation}\label{variational form}
\begin{split}
p\int_{\{\phi\neq 0\}} u \frac{\partial \phi}{\partial t} \,dx\,dt+ &\int_{\{\phi \neq 0\}} |\nabla u|^p\,dx\,dt \\
&\leq K \int_{\{\phi \neq 0\}}|\nabla u+\nabla\phi|^p\,dx\,dt,
\end{split}
\end{equation}
for every compactly supported $\phi \in C_0^{\infty}(\Omega\times (0,T))$. Here $\Omega\subset \R^d$ denotes a domain. A generalization of this variational problem is to consider \eqref{variational form} with the weakened  assumption $K\geq 1$. The function $u$ is then called a parabolic $K$-quasiminimizer related to \eqref{p-parabolic equation}. When $p=2$, equation \eqref{p-parabolic equation} is the classical heat equation.

Our main result is to show that a parabolic quasiminimizer $u$ in a doubling metric measure space has the following higher integrability property: The upper gradient \cite{HeinKosk98} of $u$ is locally integrable to a slightly higher power than is initially assumed (Theorem \ref{local higher integrability}). Although being local, the estimate we obtain is scale and place invariant inside the set.

As a first step for examining higher regularity of quasiminimizers in metric measure spaces, we only treat the simplest case $p=2$, and so the quasiminimizers in this paper are related to the heat equation. Assuming a weak $(1,2)$-Poincaré inequality and a doubling measure, starting from the definition of quasiminimizers we prove an energy type estimate and a Caccioppoli type inequality for $u$. Then using these and Sobolev--Poincaré's inequality we show a reverse Hölder inequality, from which higher integrability follows.

 The novelty of this paper is that we prove our results in the general metric measure space setting, using a purely variational approach. No reference is made to the explicit scaling properties of the measure, or on assumptions that the measure is translation invariant. Instead we rely on taking integral averages and on the assumption that the measure is doubling. Also, no reference is made to the equation, as the local notion of a weak solution is replaced by quasiminimizers, which are in general known to not be uniquely determined by the quasiminimizing property. The variational nature of quasiminimizers opens up the possibility to substitute gradients by upper gradients, which do not require the existence of partial derivatives on the underlying space. This way, the theory of parabolic PDEs is extended to metric measure spaces.
 
 Methods established in the general metric space setting can be expected to be robust and of fundamental nature, in the sense that they are largely independent of the geometry of the underlying measure space. In connection with this, it is worth noting that already in the Euclidean case, an open question is to show that a weak solution to the doubly non-linear partial differential equations is locally higher integrable. For $p=2$, the $K$-quasiminimizer related to the doubly non-linear partial differential equation coincides with the one used in this paper.

In the elliptic case quasiminimizers have been extensively investigated. Originally, quasiminimizers were introduced in the elliptic setting by Giaquinta and Giusti \cite{GiaqGius82, GiaqGius84} as a tool for a unified treatment of variational integrals, elliptic equations and systems, and quasiregular mappings in $\R^d$. Giaquinta and Giusti realized that De Giorgi's method \cite{DeGi57} could be extended to quasiminimizers, and they showed, in particular, that elliptic quasiminimizers are locally Hölder continuous. DiBenedetto and Trudinger \cite{DiBeTrud84} proved the Harnack inequality for quasiminimizers. As mentioned, unlike partial differential equations, quasiminimizers are a purely variational notion, and so Kinnunen and Shanmugalingam \cite{KinnShan01} were able to extend these regularity results for elliptic quasiminimizers into the general metric setting by using upper gradients.

In comparison to the elliptic case, already in the Euclidean case the literature available for parabolic quasiminimizers is relatively limited. Following Giaquinta and Giusti, Wieser \cite{Wies87} generalized the notion of quasiminimizers to the parabolic setting in Euclidean spaces. Parabolic quasiminimizers have also been studied by Zhou \cite{Zhou93,Zhou94}, Gianazza and Vespri \cite{GianVesp06c}, Marchi \cite{Marc94} and Wang \cite{Wang88}. Recently, also the notions of parabolic quasiminimizers has  been extended and studied in metric spaces \cite{KinnMaroMiraParo12},\cite{MassSilj11}.

 Higher integrability results were introduced in the parabolic setting by Giaquinta and Struwe \cite{GiaqStru82}, when they proved reverse Hölder inequalities and local higher integrability in the case $p=2$, for weak solutions of parabolic second order $p$-growth systems. Kinnunen and Lewis \cite{KinnLewi00} extended this local result to the general degenerate and singular case $p\neq 2$. Recently, several authors have worked in the parabolic setting on questions concerning local and global higher integrability and reverse Hölder inequalities, see \cite{Misa06}, \cite{AcerMing07}, \cite{Boge08, Boge09}, \cite{Parv09a,Parv09b}, \cite{BogeParv10}, \cite{BogeDuzaMing11},\cite{Fuga12}, and in particular for quasiminimizers see \cite{Parv08} and \cite{Habe}.  


\section{Preliminaries}

\subsection{Doubling measure}

Let
$X=(X,d,\mu)$ be a
complete metric space endowed with a metric $d$ and a positive complete
doubling Borel measure $\mu $ which supports a weak $(1,2)$-Poincaré inequality. 

The measure $\mu$ is called \emph{doubling} if there exists a constant
$c_\mu \geq 1$, such that for all balls $B= B(x_0,r):=\{x\in X:
d(x,x_0)<r\}$ in~$X$,
\begin{equation*}
        \mu(2B) \le c_\mu \mu(B),
\end{equation*}
where $\lambda B=B(x_0,\lambda r)$.


 By iterating the doubling condition, it follows with
$s=\log_{2}c_{\mu}$ and $C=c_\mu^{-2}$ that
\begin{equation} \label{eq:DoublingConsequence}
\frac{\mu(B(z,r))}{\mu(B(y,R))} \geq C\Bigl(\frac{r}{R}\Bigr)^{s},
\end{equation}
for all balls $B(y,R)\subset X$, $z \in B(y,R)$
and $0 < r \leq R < \infty$.
However, here $c_\mu$ does not have to be optimal. From now on we fix $c_\mu>1$ and so $s>0$.

\subsection{Notation}

Next we introduce more notation  used throughout this paper. 
 Given  any $z_0=(x_0,t_0)\in X\times \R \trm{ and
}{\rho}>0,$ let
\[
B_\rho(x_0)=\{\,x\in X\, :\, d(x,x_0)<\rho\,\},
\]
denote an open ball in $X$, and let
\[
\Lambda_{\rho}(t_0)=(t_0-\half {\rho}^2,\, t_0+\half {\rho}^2),
\]
denote an open interval in $\R$. A space-time cylinder in
$X\times \R$ is denoted by
\[
\q{{\rho}}(z_0)=B_{\rho}(x_0)\times \Lambda_{\rho}(t_0),
\]
so that $\nu(\q{{\rho}}(z_0))=\mu(B_{\rho}(x_0))\rho^2$.
When no confusion arises, we shall omit the reference points
  and write briefly $B_{\rho},\ \Lambda_{\rho}$ and $Q_{\rho}$.
 We denote the product measure by $
d \nu=d \mu\, d t$. The integral average of $u$ is
denoted by
\begin{align}\label{spatial mean value}
u_{B_\rho}(t)=\kint_{B_{\rho}} u(x,t)\, d \mu=\frac{1}{\mu({B_{\rho})}}\int_{B_{\rho}} u(x,t)\, d \mu
\end{align}
and
\[
\begin{split}
\kint_{Q_\rho} u \,d \nu =\frac{1}{\nu(Q_\rho)}\int_{Q_\rho} u \,d \nu.
\end{split}
\]
Let $\Omega\subset X$ be a domain, and let $0<T<\infty$. We denote $\Omega_T=\Omega\times (0,T)$.

%
\subsection{Upper gradients.} Following \cite{HeinKosk98}, a non-negative Borel measurable function $g: \Omega \rightarrow [0, \infty]$ is said to be an upper gradient of a function $u: \Omega\rightarrow [-\infty, \infty]$ in $\Omega$, if for all compact rectifiable paths $\gamma$ joining $x$ and $y$ in $\Omega$ we have 
\begin{align}\label{upper gradient}
|u(x)-u(y)|\leq \int_{\gamma} g \,ds.
\end{align}
In case $u(x)=u(y)=\infty$ or $u(x)=u(y)=-\infty$, the left side is defined to be $\infty$. Assume $1\leq p <\infty$. The $p$-modulus of a family of paths $\Gamma$ in $\Omega$ is defined to be
\begin{align*}
 \inf_\rho \int_\Omega \rho^p \,d\mu,
\end{align*}
where the infimum is taken over all non-negative Borel measurable functions $\rho$ such that for all rectifiable paths $\gamma$ which belong to $\Gamma$, we have
\begin{align*}
 \int_{\gamma} \rho \, ds \geq 1.
\end{align*}
A property is said to hold for $p$-almost all paths, if the set of non-constant paths for which the property fails is of zero $p$-modulus. Following \cite{KoskMacm98,Shan00}, if \eqref{upper gradient} holds for $p$-almost all paths $\gamma$ in $\Omega$, then $g$ is said to be a $p$-weak upper gradient of $u$. 

When $1\leq p<\infty$ and $u\in L^p(\Omega)$, it can be shown \cite{Shan01,Hajl} that  there exists a minimal $p$-weak upper gradient of $u$, we denote it by $g_u$, in the sense that $g_u$ is a $p$-weak upper gradient of $u$
and for every $p$-weak upper gradient $g$ of $u$ it holds $g_u\leq g$ $\mu$-almost everywhere in $\Omega$. 
Moreover, if $v=u$ $\mu$-almost everywhere in a Borel set $A\subset \Omega$, then $g_v=g_u$ $\mu$-almost everywhere in $A$. Also, if $u,v\in L^p(\Omega)$, then $\mu$-almost everywhere in $\Omega$, we have
\begin{align*}
&g_{u+v}\leq g_u+g_v,\\
&g_{uv}\leq |u|g_v+|v|g_u.
\end{align*}
Proofs for these properties and more on upper gradients in metric spaces can be found for example in \cite{BjorBjor11} and the references therein. See also \cite{Chee99} for a discussion on upper gradients.

\subsection{Newtonian spaces.} Following \cite{Shan00}, for  $1<p<\infty$, and $u\in L^p(\Omega)$, we define
\begin{align*}
\|u\|_{1,p,\Omega}=\|u\|_{L^p(\Omega)}+\|g_u\|_{L^p(\Omega)},
\end{align*}
and
\begin{align*}
\widetilde N^{1,p}(\Omega)= \{ u\,:\, \|u\|_{1,p,\Omega}<\infty\}.
\end{align*}
An equivalence relation in $\widetilde N^{1,p}(\Omega)$ is defined by saying that $u\sim v$ if
\begin{align*}
 \|u-v\|_{\widetilde N^{1,p}(\Omega)}=0.
\end{align*}
The \emph{Newtonian space} $N^{1,p}(\Omega)$ is defined to be the space $\widetilde N^{1,p}(\Omega)/ \sim$, with the norm
\begin{align*}
 \|u\|_{N^{1,p}(\Omega)}=\|u\|_{1,p,\Omega}.
\end{align*}

A function $u$ belongs to the local Newtonian space $N_{\textrm{loc}}^{1,p}(\Omega)$ if it belongs to $N^{1,p}(\Omega')$ for every $\Omega' \subset \subset \Omega$. The Newtonian space with zero boundary values is defined as 
\begin{align*}
N_0^{1,p}(\Omega)=\{\, u|_\Omega\,:\,u \in N^{1,p}(X),\,u=0 \textrm{ in }X\setminus \Omega\,\}.
\end{align*}
In practice this means that a function belongs to $N_0^{1,p}(\Omega)$ if and only its zero extension to $X\setminus \Omega$ belongs to $N^{1,p}(X)$. For more properties of Newtonian spaces, see \cite{Hein01,Shan00, BjorBjor11}.

\subsection{Poincar\'e's and Sobolev's inequality}
For $1\leq q <\infty$, $1< p < \infty$, the measure $\mu$ is said to support a weak $(q,p)$-Poincar\'e inequality if there exist constants $c_P>0$ and $\lambda\ge 1$ such that
\begin{equation}
\l(\vint_{B_\rho(x)}|v-v_{B_\rho(x)}|^q \, d\mu\r)^{1/q} \le c_P \rho\left(\vint_{ B_{\lambda\rho}(x)} g_v^p \, d\mu\right)^{1/p},
\end{equation}
for every $v\in N^{1,p}(X)$ and $B_\rho(x)\subset X$. In case $\lambda=1$, we say a $(q,p)$-Poincaré inequality is in force. In a general metric measure space setting, it is of interest to have assumptions which are invariant under bi-Lipschitz mappings. The weak $(q,p)$-Poincaré inequality has this quality. 

For a metric space $X$ equipped with a doubling measure $\mu$, it is a result by Hajlasz and Koskela \cite{HajlKosk95} that the following Sobolev inequality holds: If  $X$ supports a weak $(1,p)$-Poincaré inequality for some $1<p<\infty$, then $X$ also supports a weak $(\kappa,p)$-Poincaré
inequality, where
\begin{equation*}
\kappa=\begin{cases}\frac{s p}{s-p},  &\text{for} \quad 1<p<s, \\ 2p, &\text{otherwise}, \end{cases}
\end{equation*}
possibly with different constants $c_P'>0$ and $\lambda'\geq 1$. 

\begin{remark} \label{poincare_remark}
It is a recent result by Keith and Zhong \cite{KeitZhon08}, that when $1<p<\infty$ and $(X,d)$ is a complete metric space with doubling measure $\mu$, the weak $(1,p)$-Poincaré inequality implies a weak $(1,q)$-Poincar\'e inequality for some $1<q<p$. Then by the above discussion, $X$ also supports a weak $(\kappa,q)$-Poincaré inequality with some $\kappa>q$. By Hölder's inequality, the left hand side of the weak $(\kappa,q)$-Poincaré inequality can be estimated from below by replacing $\kappa$ with any positive $\kappa'<\kappa$. Hence we conclude, that if $X$ supports a weak $(1,p)$-Poincar\'e inequality with $1<p<\infty$, then $X$ also supports a weak $(q,q)$-Poincar\'e inequality with some $1<q<p$.

\end{remark}


\subsection{Parabolic Newtonian spaces}

For $1<p<\infty$, we say that
\[
u\in L^p(0,T;N^{1,p}(\Om)),
\]
if the function $x
\mapsto u(x,t)$ belongs to $N^{1,p}(\Om)$ for almost every $0 < t <
T$, and $u(x,t)$ is measurable as a mapping from $(0,T)$ to $N^{1,p}(\Om)$, that is, the preimage on $(0,T)$ for any given open set in $N^{1,p}(\Om)$ is measurable. 
Furthermore, we require that the norm
\[
\|u\|_{L^p(0,T;N^{1,p}(\Omega))}=\left(\int_{0}^{T}\norm{u}^p_{N^{1,p}(\Om)}dt\right)^{1/p}
\]
is finite. Analogously, we define 
$L^p(0,T;N_0^{1,p}(\Om))$ and $L_{\textrm{loc}}^p(0,T;N_{\textrm{loc}}^{1,p}(\Omega))$. The space of compactly supported Lipschitz-continuous functions $\Lip_c(\Om_T)$ consists of functions $u,\, \textrm{supp}\, u\subset \Om_T$, for which there exists a positive constant $C_{\Lip}(u)$ such that
\[
\begin{split}
\abs{u(x,t)-u(y,s)}\leq C_{\Lip}(u) (d(x,y)+\abs{t-s}),
\end{split}
\]
whenever $(x,t),(y,s) \in \Om_T$.
The parabolic minimal $p$-weak upper gradient of a function $u\in L_{\textrm{loc}}^p(t_1,t_2;N_{\textrm{loc}}^{1,p}(\Omega))$ is defined in a natural way by setting
\begin{align*}
g_u(x,t)=g_{u(\cdot,t)}(x), 
\end{align*}
at $\nu$-almost every $(x,t)\in \Omega\times (0,T)$. When $u$ depends on time, we refer to $g_u$ as the upper gradient of $u$. The next Lemma on taking limits of upper gradients will be used later in this paper. Here and throughout this paper we denote the time wise mollification of a function by
\begin{align*}
f_\varepsilon(x,t)=\int_{-\varepsilon}^\varepsilon \zeta_\varepsilon(s)f(x,t-s)\,ds,
\end{align*}
where $\zeta_\varepsilon$ is  the standard mollifier with support in $(-\varepsilon,\varepsilon)$.
\begin{lemma}\label{convergence of upper gradient} Let $u\in L_\textrm{loc}^p(0,T;N_{\textrm{loc}}^{1,p}(\Omega))$. Then the following statements hold:
\begin{itemize} \item[(a)] As $s\rightarrow 0$, we have $g_{u(x,t-s)-u(x,t)}\rightarrow 0$ in $L_{\textrm{loc}}^p(\Omega_T)$.
\item[(b)] As $\varepsilon\rightarrow 0$, we have $g_{u_\varepsilon -u} \rightarrow 0$ pointwise $\nu$-almost everywhere in $\Omega_T$ and in $L_{\textrm{loc}}^p(\Omega_T)$.
\end{itemize}
\begin{proof}
See Lemma 6.8 in \cite{MassSilj11}. 
\end{proof}
\end{lemma}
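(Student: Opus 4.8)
\emph{Proof plan.} The idea is to reduce both statements to the strong continuity of translation in the Bochner space $L^p\bigl(0,T;N^{1,p}(\Omega')\bigr)$, using that the $N^{1,p}(\Omega')$-norm dominates the $L^p(\Omega')$-norm of the minimal $p$-weak upper gradient. Fix a cylinder $\Omega'\times(a,b)\subset\subset\Omega_T$ and $\delta>0$ with $(a-\delta,b+\delta)\subset(0,T)$; take $|s|<\delta$ and $0<\varepsilon<\delta$ throughout. By hypothesis the restriction of $u$ lies in $L^p\bigl(a-\delta,b+\delta;N^{1,p}(\Omega')\bigr)$, and since the minimal $p$-weak upper gradient is local, $g_{u(x,t-s)-u(x,t)}$ and $g_{u_\varepsilon-u}$ may be computed on $\Omega'$ from this restriction.

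\emph{Part (a).} For a.e.\ $t$, put $w=u(\cdot,t-s)-u(\cdot,t)\in N^{1,p}(\Omega')$; then $g_{u(x,t-s)-u(x,t)}=g_w(x)$ and $\|g_w\|_{L^p(\Omega')}\le\|w\|_{N^{1,p}(\Omega')}$, so
\[
\int_a^b\!\int_{\Omega'}g_{u(x,t-s)-u(x,t)}^p\,d\mu\,dt\le\int_a^b\|u(\cdot,t-s)-u(\cdot,t)\|_{N^{1,p}(\Omega')}^p\,dt.
\]
The right-hand side tends to $0$ as $s\to 0$ by strong continuity of translation on $L^p\bigl(a-\delta,b+\delta;N^{1,p}(\Omega')\bigr)$ (true for $N^{1,p}(\Omega')$-valued step functions and then by density), which proves (a).

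\emph{Part (b).} Since $\int_{-\varepsilon}^{\varepsilon}\zeta_\varepsilon=1$, for a.e.\ $t$ we have the Bochner identity $u_\varepsilon(\cdot,t)-u(\cdot,t)=\int_{-\varepsilon}^{\varepsilon}\zeta_\varepsilon(s)\bigl(u(\cdot,t-s)-u(\cdot,t)\bigr)\,ds$ in $N^{1,p}(\Omega')$. The map $v\mapsto g_v$ from $N^{1,p}(\Omega')$ to $L^p(\Omega')$ is sublinear and, since $|g_v-g_{v'}|\le g_{v-v'}$ $\mu$-a.e., continuous; approximating the Bochner integral by Riemann sums and passing to the limit gives, for $\nu$-a.e.\ $(x,t)$,
\[
g_{u_\varepsilon-u}(x,t)\le\int_{-\varepsilon}^{\varepsilon}\zeta_\varepsilon(s)\,g_{u(x,t-s)-u(x,t)}\,ds.
\]
Raising to the power $p$, integrating over $\Omega'\times(a,b)$ and using Minkowski's integral inequality in $s$ yields $\|g_{u_\varepsilon-u}\|_{L^p(\Omega'\times(a,b))}\le\sup_{|s|\le\varepsilon}\|g_{u(\cdot,\cdot-s)-u}\|_{L^p(\Omega'\times(a,b))}$, which tends to $0$ by (a); this is the convergence in $L_{\textrm{loc}}^p(\Omega_T)$. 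For the pointwise statement, at Lebesgue points $t$ of $\tau\mapsto u(\cdot,\tau)$ (hence a.e.\ $t$) one has $u_\varepsilon(\cdot,t)\to u(\cdot,t)$ in $N^{1,p}(\Omega')$, so $g_{u_\varepsilon(\cdot,t)-u(\cdot,t)}\to 0$ in $L^p(\Omega')$; combined with the displayed bound, the a.e.\ convergence $(g_u)_\varepsilon\to g_u$, and a subsequence extraction from the $L^p$-convergence, this gives $g_{u_\varepsilon-u}\to0$ $\nu$-a.e.

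\emph{Expected obstacle.} The density argument for translation continuity and Minkowski's inequality are routine. The delicate point is pushing sublinearity of the upper gradient through the Bochner integral — justifying $g_{\int\zeta_\varepsilon(s)w_s\,ds}\le\int\zeta_\varepsilon(s)g_{w_s}\,ds$ via continuity of $v\mapsto g_v$ and a Riemann-sum limit — together with the fact that the pointwise $\nu$-a.e.\ convergence in (b) is naturally obtained only along a subsequence of $\varepsilon\to0$ (which is all that is needed later).
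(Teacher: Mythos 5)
The paper offers no argument of its own here---it simply cites Lemma 6.8 of \cite{MassSilj11}---so your self-contained Bochner-space proof is by construction a different route, and most of it is sound. Part (a), reducing to strong continuity of translations in $L^p\bigl(a-\delta,b+\delta;N^{1,p}(\Omega')\bigr)$ via $\|g_w\|_{L^p(\Omega')}\leq\|w\|_{N^{1,p}(\Omega')}$ and the locality of the minimal $p$-weak upper gradient on $\Omega'$, is fine (note that $N^{1,p}(\Omega')$ embeds isometrically into $L^p\times L^p$, hence is separable under the doubling assumption, so the measurability notion of the paper upgrades to strong measurability and the usual density/translation argument applies). The $L^p_{\textrm{loc}}$ half of (b) is also fine once the key inequality $g_{u_\varepsilon-u}\leq\int_{-\varepsilon}^{\varepsilon}\zeta_\varepsilon(s)\,g_{u(\cdot,\cdot-s)-u(\cdot,\cdot)}\,ds$ is in place; your Riemann-sum justification works, provided you record that $s\mapsto g_{u(\cdot,t-s)-u(\cdot,t)}$ is strongly measurable with values in $L^p(\Omega')$, which follows from $\|g_v-g_{v'}\|_{L^p}\leq\|v-v'\|_{N^{1,p}}$. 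This is essentially the same machinery as in the cited lemma; what you gain is a proof that does not lean on the external reference.

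The genuine shortfall is the pointwise statement in (b). The lemma asserts $g_{u_\varepsilon-u}\to 0$ $\nu$-almost everywhere as $\varepsilon\to 0$, for the full limit, whereas your argument yields a.e.\ convergence only along a subsequence extracted from the $L^p$-convergence. The appeal to ``$(g_u)_\varepsilon\to g_u$ a.e.''\ does not bridge this: $g_{u_\varepsilon-u}$ is not controlled by $|(g_u)_\varepsilon-g_u|$, only by $(g_u)_\varepsilon+g_u$, which does not tend to zero. Your Lebesgue-point observation gives, for a.e.\ fixed $t$, convergence $g_{u_\varepsilon(\cdot,t)-u(\cdot,t)}\to 0$ in $L^p(\Omega')$ for the full limit in $\varepsilon$, but that is still convergence in norm in $x$, not pointwise, so an additional idea (or the citation itself) is needed to obtain the a.e.\ statement as stated. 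For this paper the omission is harmless: Lemma \ref{convergence of upper gradient} is invoked in the energy estimate only through the $L^1_{\textrm{loc}}$-convergence of $g^2_{u-u_\varepsilon}$ and $g^2_{u(\cdot,\cdot-s)-u}$ (the case $p=2$), which your argument does deliver; but as a proof of the lemma as stated, the pointwise claim remains unproven.
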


The following density result concerning parabolic Newtonian spaces is often cited when working with parabolic quasiminimizers. Here we give a direct proof for $1<p<\infty$.
\begin{lemma}\label{density lemma}
Assume $(X,d,\mu)$ is a complete doubling space supporting a $(1,p)$-Poincar\'e inequality, where $1<p<\infty$.  Let $\phi \in L^p(0,T; N_0^{1,p}(\Omega))$. Then for every $\varepsilon>0$ there exists a function $\varphi \in \textrm{Lip}(\Omega_T)$ such that $\{\varphi \neq 0\}\subset \subset \Omega_T$ and
\begin{align*}
\|\phi-\varphi\|_{L^p(0,T;N^{1,p}(\Omega))}<\varepsilon,\quad \textrm{ and }\quad \nu(\{\varphi\neq 0\}\setminus \{\phi \neq 0\})<\varepsilon.
\end{align*}
\begin{proof}
Assume $\phi$ is as in the claim, and let $\varepsilon>0$. The measure $\nu$ is regular in $\Omega_T$, and so there exists an open set $F\subset \Omega$ such that $\{\phi\neq 0\}\subset F$ and $\nu(F\setminus \{\phi \neq 0\})<\varepsilon$. 
For each $t\in (0,T)$, denote the open set $F_t=\{x\,:\,(x,t)\in F\}$.
Then for almost every $t\in (0,T)$ we have $\phi(\cdot,t)\in N_0^{1,p}(F_t)$. Since by assumption $(X,d,\mu)$ is a complete doubling metric space which supports a $(1,p)$-Poincaré  inquality, it is a result of Shanmugalingam \cite{Shan01} that the space of compactly supported Lipschitz continuous functions is dense in $N_0^{1,p}$. Hence for almost every $t\in(0,T)$ there exists a function $\widetilde \psi(\cdot,t) \in \textrm{Lip}_\textrm{c}(F_t)$ such that
\begin{align}\label{spatial approximation}
\|\phi(\cdot,t)-\widetilde\psi(\cdot,t) \|_{N^{1,p}(\Omega)}<\varepsilon.
\end{align}
Denote for each $\delta>0$, 
\begin{align*}
E_\delta&=\left\{\,t\in(\delta,T-\delta)\;:\,d(\{\widetilde \psi(\cdot,t)\neq 0\},\Omega\setminus F_t) \geq \delta\,\right.\\
&\left.\qquad\qquad \,C_\textrm{Lip}(\widetilde \psi(\cdot,t))\leq\delta^{-1},\qquad \|\widetilde \psi(\cdot,t)\|_\infty\leq \delta^{-1}\,\right\}.
\end{align*}
Since each $\widetilde \psi(\cdot,t)$ has compact support, is bounded and Lipschitz-continuous, we can see that $|(0,T)\setminus E_\delta|\rightarrow 0$ as $\delta \rightarrow 0$. Since $\phi \in L^p(0,T; N_0^{1,p}(\Omega))$, this implies that we can fix $\delta$ to be such that
\begin{align}\label{residue approximation}
\|\phi\|_{ L^p((0,T)\setminus E_\delta ; N^{1,p}(\Omega))}<\varepsilon.
\end{align}
For each $t\in E_\delta$, define the compact set
\begin{align*}
K_t=\left\{ x\in F_t\,:\, d(x,\Omega \setminus F_t)\geq \delta \right\}.
\end{align*}
The fact that $K_t$ is a compact subset of $F_t$ implies that for each $t\in E_\delta$ the set $K_t\times \{t\}$ is a compact subset of $F$. This implies that for each $t\in E_\delta$ there exists a positive constant $0<\delta_t<\delta$ such that 
$\overline {B(x,\delta_t)}\times [t-\delta_t,t+\delta_t] \subset F$ for every $x\in K_t$. 
Denote then for every $0<\rho <\delta$,
\begin{align*}
\widetilde E_{\rho}= \{t\in E_\delta \,:\, \overline {B(x,\rho)}\times [t-\rho,t+\rho] \subset F \textrm{ for every }x\in K_t\}.
\end{align*} 
Then $|E_\delta\setminus \widetilde E_{\rho}|\rightarrow 0$ as $\rho \rightarrow 0$, and so we can fix $\rho<\delta$ to be such that
\begin{align}\label{residue approximation 2}
\|\phi\|_{ L^p(E_\delta\setminus \widetilde E_\rho ; N^{1,p}(\Omega))}<\varepsilon.
\end{align}
Set 
\begin{align*}
\psi(x,t)=
\begin{cases}
\widetilde \psi(x,t), &\,t\in \widetilde E_\rho,\\
0, &\textrm{otherwise}.
\end{cases}
\end{align*}
By \eqref{spatial approximation}, \eqref{residue approximation} and \eqref{residue approximation 2}, we have 
\begin{align*}
&\|\phi-\psi\|_{L^p(0,T;N^{1,p}(\Omega)}\leq \|\phi-\psi\|_{L^p(E_\delta;N^{1,p}(\Omega))}+\| \phi \|_{L^p((0,T)\setminus E_\delta;N^{1,p}(\Omega))}\\
 &\quad \leq \|\phi-\psi\|_{L^p(\widetilde E_\rho;N^{1,p}(\Omega))} +\|\phi\|_{L^p(E_\delta \setminus \widetilde E_\rho;N^{1,p}(\Omega))} +\|\phi\|_{L^p(E_\delta;N^{1,p}(\Omega))}\\
 &\quad\leq \left(\int_{\widetilde E_\rho} \|\phi-\widetilde \psi\|_{N^{1,p}(\Omega)}^p\,dt\right)^{\frac{1}{p}} + 2\varepsilon\leq T^\frac{1}{p}\varepsilon +2\varepsilon.
\end{align*}
Now, from the way we constructed $\psi$, it follows that for every $(x,t)\in \{\psi\neq 0\}$ we have $\overline {B(x,\rho)} \times [t-\rho,t+\rho]\subset F$, and so $\{\psi\neq 0\}$ is compactly contained in $F$. 
Hence there exists a compact set $K$ such that for any $0<\sigma<\rho$, we also have 
$\{\psi_\sigma\neq 0\}\subset K\subset\subset F$. Since for each $t\in (0,T)$ the function $\psi(\cdot,t)$ is Lipschitz-continuous with Lipschitz constant $\delta^{-1}$, we have for every $x,y\in \Omega$ and $t\in (0,T)$
\begin{align*}
|\psi_\sigma(x,t)-\psi_\sigma(y,t)|\leq \int_{-\sigma}^{\sigma} \zeta_\sigma(s) |\psi(x,t-s)-\psi(y,t-s)|\,ds \leq \delta^{-1} d(x,y).
\end{align*}
On the other hand, it is straightforward to show using the theory of mollifiers, that 
since $|\psi|\leq \delta^{-1}$ uniformly in $\Omega_T$, for a fixed $0<\sigma<\rho$ the time derivative of $\psi_\sigma$ is uniformly bounded in $\Omega_T$. Hence we have for every $x\in \Omega$ and $t_1,t_2\in (0,T)$,
\begin{align*}
|\psi_\sigma(x,t_1)-\psi_\sigma(x,t_2)| \leq \left\| \frac{\partial \psi_\sigma}{\partial t} \right\|_\infty|t_1-t_2|.
\end{align*}
This means that for every $0<\sigma<\rho$  we have $\psi_\sigma \in Lip(\Omega_T)$ and $\{\psi_\sigma \neq 0\} \subset K$. Now,
\begin{equation}\label{approximation}
\begin{split}
&\|\phi-\psi_\sigma\|_{L^p(0,T;N^{1,p}(\Omega))}\\&\quad\leq \|\phi-\psi\|_{L^p(0,T;N^{1,p}(\Omega))}+\|\psi-\psi_\sigma\|_{L^p(0,T;N^{1,p}(\Omega))}\\
&\quad\leq T^\frac{1}{p}\varepsilon +2\varepsilon+ \|\psi-\psi_\sigma\|_{L^p(K)}+\|g_{\psi-\psi_\sigma}\|_{L^p(K)}.
\end{split}
\end{equation}
 Treating $\zeta_\sigma(s)\,ds$ as a unit measure, we have by Jensen's inequality, and Fubini's theorem
\begin{align*}
\int_{K}|\psi-\psi_\sigma|^p\,d\nu \leq \int_{-\sigma}^{\sigma} \zeta_\sigma(s) \int_{K}|\psi(x,t)-\psi(x,t-s)|^p\,d\nu\,ds.
\end{align*}
Therefore, by continuity of translation for functions in $L^p(\Omega_T)$, we see that the third term on the right hand side of \eqref{approximation} tends to zero  as $\sigma\rightarrow 0$. Since for every $t\in (0,T)$ we have $\psi(\cdot,t)\in\,$Lip$_c(\Omega)$ uniformly with the same Lipschitz constant $\delta^{-1}$, by the proof of Lemma 6.8 in \cite{MassSilj11} (note that in that proof, for Lipshitz-functions no density results are used), also the last term on the right hand side of \eqref{approximation} tends to zero as $\sigma \rightarrow 0$. Moreover, for every $0<\sigma<\rho$ we have
\begin{align*}
\nu(\{\psi_\sigma \neq 0\}\setminus \{\phi \neq 0\})\leq \nu(F\setminus \{\phi \neq 0\}) \leq \varepsilon.
\end{align*}
Setting $\varphi=\psi_\sigma$ with a small enough $\sigma$ completes the proof. 
\end{proof}
\end{lemma}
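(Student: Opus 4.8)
The plan is to approximate $\phi$ in two stages: first by a function $\psi$ that is Lipschitz in the spatial variable for almost every fixed time (with uniform control on the Lipschitz constant and sup-norm on a good time-set), and then by mollifying in time to gain joint Lipschitz regularity and compact support in $\Omega_T$. The spatial approximation is available off-the-shelf: for a.e.\ $t$, Shanmugalingam's density theorem \cite{Shan01} for $N_0^{1,p}$ in complete doubling $(1,p)$-Poincar\'e spaces gives compactly supported Lipschitz $\widetilde\psi(\cdot,t)$ on the open slice $F_t$ approximating $\phi(\cdot,t)$ in the $N^{1,p}$-norm, where $F \supset \{\phi\neq 0\}$ is an open set with $\nu(F\setminus\{\phi\neq 0\})<\varepsilon$ chosen by outer regularity of $\nu$.

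Next I would address measurability and uniform bounds, which is the crux of the argument. The functions $\widetilde\psi(\cdot,t)$ are produced slice-by-slice with no a priori joint measurability or uniform Lipschitz constant, so I would restrict to a time-set $E_\delta$ on which (i) the spatial support of $\widetilde\psi(\cdot,t)$ stays $\delta$-away from $\partial F_t$, (ii) $C_{\Lip}(\widetilde\psi(\cdot,t))\le\delta^{-1}$, and (iii) $\|\widetilde\psi(\cdot,t)\|_\infty\le\delta^{-1}$, noting $|(0,T)\setminus E_\delta|\to 0$ as $\delta\to 0$; since $\phi\in L^p(0,T;N_0^{1,p})$ this lets me discard the bad times at the cost of $\varepsilon$ in norm. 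I then pass to a further subset $\widetilde E_\rho\subset E_\delta$ on which closed space-time boxes $\overline{B(x,\rho)}\times[t-\rho,t+\rho]$ around the relevant spatial support sit inside $F$, again losing only $\varepsilon$, and set $\psi=\widetilde\psi$ on $\widetilde E_\rho$ and $0$ elsewhere. A triangle-inequality estimate combining \eqref{spatial approximation}, \eqref{residue approximation}, \eqref{residue approximation 2} then bounds $\|\phi-\psi\|_{L^p(0,T;N^{1,p}(\Omega))}$ by a constant multiple of $\varepsilon$.

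Finally I would mollify in time: for $0<\sigma<\rho$ put $\varphi=\psi_\sigma$. The box condition defining $\widetilde E_\rho$ forces $\{\psi_\sigma\neq 0\}$ to lie in a fixed compact $K\subset\subset F\subset\subset\Omega_T$; the uniform spatial Lipschitz bound $\delta^{-1}$ passes through the mollifier unchanged, and the uniform bound $|\psi|\le\delta^{-1}$ together with standard mollifier estimates gives a uniform bound on $\partial\psi_\sigma/\partial t$, hence joint Lipschitz continuity, so $\varphi\in\Lip(\Omega_T)$. For the $N^{1,p}$-convergence $\psi_\sigma\to\psi$ as $\sigma\to 0$: the $L^p(K)$ part follows from Jensen, Fubini, and $L^p$-continuity of translation, and the upper-gradient part $\|g_{\psi-\psi_\sigma}\|_{L^p(K)}\to 0$ follows from (the Lipschitz case of) the proof of Lemma~6.8 in \cite{MassSilj11}, which requires no density results. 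Combining with the bound from the previous step, $\|\phi-\varphi\|_{L^p(0,T;N^{1,p}(\Omega))}$ is controlled; and $\nu(\{\varphi\neq 0\}\setminus\{\phi\neq 0\})\le\nu(F\setminus\{\phi\neq 0\})<\varepsilon$. Relabeling the $\varepsilon$'s finishes the proof. The main obstacle is the bookkeeping in the second stage: turning the purely pointwise-in-time spatial approximation into something measurable in $t$ with uniform constants, which is exactly what the nested sets $E_\delta\supset\widetilde E_\rho$ and the $L^p(0,T;N_0^{1,p})$ hypothesis are engineered to handle.
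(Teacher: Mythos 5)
Your proposal is correct and follows essentially the same route as the paper's own proof: outer regularity to get the open set $F$, slice-wise Lipschitz approximation via Shanmugalingam's density theorem, the nested good time-sets $E_\delta\supset\widetilde E_\rho$ to obtain measurability, uniform Lipschitz/sup bounds and the space-time box condition, and finally time mollification with the $L^p$-translation argument and the Lipschitz case of Lemma 6.8 of \cite{MassSilj11} for the upper-gradient term. No substantive differences to report.
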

For the case $1<p<2$, it is not obvious that convergence of the dense Lipshitz  functions in the parabolic Newtonian space implies convergence also in $L^2(\Omega_T)$. In the following corollary we check that if the limit function is in $L^2(\Omega_T)$, then this is the case.
\begin{corollary}\label{fortified convergence} Assume $(X,d,\mu)$ is a complete doubling space supporting a $(1,p)$-Poincar\'e inequality, where $1<p<\infty$.  Let $\phi \in L^p(0,T; N_0^{1,p}(\Omega))\cap L^2(\Omega_T)$. For every $\varepsilon>0$ there exists a function $\varphi \in \textrm{Lip}(\Omega_T)$ such that $\{\varphi \neq 0\}\subset \subset \Omega_T$ and
\begin{align*}
&\|\phi-\varphi\|_{L^p(0,T;N^{1,p}(\Omega))}<\varepsilon, \qquad \|\phi-\varphi\|_{L^2(\Omega_T)}<\varepsilon,\\
&\textrm{ and }\quad \nu(\{\varphi\neq 0\}\setminus \{\phi \neq 0\})<\varepsilon.
\end{align*}
\begin{proof} If $p\geq 2$, the claim is clearly true by Lemma \ref{density lemma} and Hölder's inequality. So let $1<p<2$, and Let $\phi \in L^p(0,T; N_0^{1,p}(\Omega))\cap L^2(\Omega_T)$. Let $\varepsilon>0$. For each $k$, denote the cutoff function
\begin{align*}
\phi_k(x,t)=
\begin{cases}
k,&\textrm{when }\phi(x,t)\geq k\\
\phi(x,t),& \textrm{when } -k<\phi(x,t)<k\\
-k,&\textrm{when }\phi(x,t)\leq -k.
\end{cases}
\end{align*}
Then  there exists a positive constant $k$ such that 
\begin{align*}
\|\phi-\phi_k\|_{L^p(0,T;N^{1,p}(\Omega))}<\frac{\varepsilon}{2} \textrm{ and } \|\phi-\phi_k\|_{L^2(\Omega_T)}<\frac{\varepsilon}{2}.
\end{align*}
By Lemma \ref{density lemma}, there exists a sequence of compactly supported functions $(\varphi_i)_i\subset \Lip(\Omega_T)$  such that
\begin{align*}
\|\phi_k-\varphi_i\|_{L^p(0,T;N^{1,p}(\Omega_T))}\rightarrow 0\quad \textrm{ and }\quad \nu(\{\varphi_i\neq 0\}\setminus \{\phi_k\neq 0\})\rightarrow 0
\end{align*}
as $i \rightarrow \infty$. Moreover, since $\phi_k$ is bounded, we may assume the sequence $\varphi_i$ to be uniformly bounded by a positive constant $m$. We can now write
\begin{align*}
&\|\phi_k-\varphi_i\|_{L^2(\Omega_T)}^2\\
&= \int_{\Omega_T}|\phi_k-\varphi_i|^2\chi_{\{|\phi_k-\varphi_i|>1\}}\,d\nu+\int_{\Omega_T}|\phi_k-\varphi_i|^2\chi_{\{|\phi_k-\varphi_i|\leq1\}}\,d\nu\\
&\leq (k+m)^2\nu(\{\,\Omega_T\,:\,|\phi_k-\varphi_i|>1\,\})+\int_{\Omega_T}|\phi_k-\varphi_i|^p\,d\nu.
\end{align*}
Since $\varphi_i$ converges to $\phi_k$ in $L^p(\Omega_T)$, the above tends to zero  as $i\rightarrow \infty$. We have
\begin{align*}
\|\phi-\varphi_i\|_{L^p(0,T;N^{1,p}(\Omega))}\leq \frac{\varepsilon}{2}+\|\phi_k-\varphi_i\|_{L^p(0,T;N^{1,p}(\Omega))},
\end{align*}
and on the other hand
\begin{align*}
\|\phi-\varphi_i\|_{L^2(\Omega_T)}\leq \frac{\varepsilon}{2}+\|\phi_k-\varphi_i\|_{L^2(\Omega_T)}.
\end{align*}
Moreover, we have
\begin{align*}
\nu(\{\varphi_i\neq 0\}\setminus \{\phi\neq 0\})=\nu(\{\varphi_i\neq 0\}\setminus \{\phi_k\neq 0\}),
\end{align*}
and so taking $\varphi_i$ with a large enough $i$ completes the proof.
\end{proof}
\end{corollary}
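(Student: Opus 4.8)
The plan is to produce the approximant from Lemma~\ref{density lemma} and to bridge the gap between the $L^p$-topology in which that lemma operates and the extra $L^2$-control by truncating $\phi$. The case $p\ge 2$ is immediate: Lemma~\ref{density lemma} already yields $\varphi\in\Lip(\Om_T)$ with $\{\varphi\ne 0\}\subset\subset\Om_T$ and $\|\phi-\varphi\|_{L^p(0,T;N^{1,p}(\Om))}$ as small as desired, and since on the set where $|\phi-\varphi|>1$ one has $|\phi-\varphi|^2\le|\phi-\varphi|^p$, a H\"older argument (after first replacing $\phi$ by a fixed bounded truncation, whose $L^2$-defect is controlled by dominated convergence) upgrades the $L^p$-smallness to $L^2$-smallness. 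The substantive range is thus $1<p<2$.

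For $1<p<2$ I would argue in three steps. \emph{Step 1 (truncate $\phi$).} Let $\phi_k$ be the truncation of $\phi$ at height $k$. Truncation does not increase the minimal $p$-weak upper gradient, and $|\phi_k|\le|\phi|$ with $\phi_k\to\phi$ pointwise, so by dominated convergence $\phi_k\to\phi$ both in $L^p(0,T;N^{1,p}(\Om))$ and in $L^2(\Om_T)$; fix $k$ so that both defects are below $\varepsilon/2$, noting $\{\phi_k\ne 0\}=\{\phi\ne 0\}$. \emph{Step 2 (apply the density lemma).} Lemma~\ref{density lemma} gives $\varphi_i\in\Lip(\Om_T)$, $\{\varphi_i\ne 0\}\subset\subset\Om_T$, with $\varphi_i\to\phi_k$ in $L^p(0,T;N^{1,p}(\Om))$ and $\nu(\{\varphi_i\ne 0\}\setminus\{\phi_k\ne 0\})\to 0$; truncating each $\varphi_i$ at height $k$ keeps it Lipschitz, does not enlarge its support, does not increase its upper gradient, and preserves the convergence (the truncation of $\phi_k$ is $\phi_k$), so we may assume $\|\varphi_i\|_\infty\le m$ for a fixed $m$. \emph{Step 3 (promote to $L^2$).} Split $\|\phi_k-\varphi_i\|_{L^2(\Om_T)}^2$ over $\{|\phi_k-\varphi_i|>1\}$ and $\{|\phi_k-\varphi_i|\le 1\}$: on the first set the integrand is at most $(k+m)^2$ while the set has measure tending to $0$ by Chebyshev's inequality and $\varphi_i\to\phi_k$ in $L^p(\Om_T)$; on the second set $|\phi_k-\varphi_i|^2\le|\phi_k-\varphi_i|^p$ because the base is at most $1$ and $p<2$, and this integral also tends to $0$. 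Hence $\varphi_i\to\phi_k$ in $L^2(\Om_T)$.

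Finally, the triangle inequality gives $\|\phi-\varphi_i\|_{L^p(0,T;N^{1,p}(\Om))}\le\varepsilon/2+\|\phi_k-\varphi_i\|_{L^p(0,T;N^{1,p}(\Om))}$ and $\|\phi-\varphi_i\|_{L^2(\Om_T)}\le\varepsilon/2+\|\phi_k-\varphi_i\|_{L^2(\Om_T)}$, while the support condition is automatic from $\{\varphi_i\ne 0\}\setminus\{\phi\ne 0\}=\{\varphi_i\ne 0\}\setminus\{\phi_k\ne 0\}$; choosing $i$ large and taking $\varphi=\varphi_i$ completes the proof. I expect the only real subtlety to be Step~3 — the quantitative interpolation between the two scales of integrability — which rests on the elementary inequality $t^2\le t^p$ for $0\le t\le 1$ when $p<2$ and on using the two truncations to trade the $L^2$-excess on the ``large'' set for a vanishing measure estimate.
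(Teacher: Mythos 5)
Your proposal is correct and follows essentially the same route as the paper's proof: truncation of $\phi$ at height $k$, application of Lemma~\ref{density lemma} to $\phi_k$ with uniformly bounded approximants, and the splitting of the $L^2$-norm over $\{|\phi_k-\varphi_i|>1\}$ and $\{|\phi_k-\varphi_i|\le 1\}$ using $t^2\le t^p$ for $t\le 1$. The only difference is that you make explicit (via truncating each $\varphi_i$) why the approximating sequence may be assumed uniformly bounded, a point the paper states without justification.
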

\begin{remark}Notice that if $\phi$ is compactly supported in $\Omega_T$, then Lemma \ref{density lemma} and Corollary \ref{fortified convergence} are also true with supp$\,\phi$ in place of $\{\phi \neq 0\}$, and respectively for the approximating functions. The proofs just have to be repeated after having replaced $\{\phi \neq 0\}$ with supp$\,\phi$, respectively for each function. Hence  we have that for $\phi \in L^p(0,T; N_0^{1,p}(\Omega))\cap L^2(\Omega_T)$ such that $\textrm{supp}\,\phi\subset \subset \Omega_T$ and $\varepsilon>0$, there exists a function $\varphi \in \textrm{Lip}(\Omega_T)$ such that $\textrm{supp}\,\varphi\subset \subset \Omega_T$,
\begin{align*}
&\|\phi-\varphi\|_{L^p(0,T;N^{1,p}(\Omega))}<\varepsilon, \qquad \|\phi-\varphi\|_{L^2(\Omega_T)}<\varepsilon,\\
&\textrm{ and }\quad \nu(\textrm{supp}\,\varphi\setminus \textrm{supp}\,\phi)<\varepsilon.
\end{align*}.
\end{remark}


\subsection{Parabolic quasiminimizers}
\begin{definition}
Let $\Omega$ be an  open subset of $X$, $u:
\Omega\times (0,T)\to \R$ and $K'\geq 1$. Let $1<p<\infty$. A function $u$
belonging to the parabolic space
$L^p_{\trm{loc}}(0,T;N_{\trm{loc}}^{1,p}(\Omega))$ is a
\emph{parabolic quasiminimizer} if
\begin{equation*}
\begin{split}
\int_{\{\phi\neq 0\}}u\parts{\phi}{t} \,d \nu + E(u, \{\phi \neq 0\}) \,\leq K'  E(u+\phi, \{\phi \neq 0\}),
\end{split}
\end{equation*}
for every $\phi \in \textrm{Lip}(\Omega_T)$  such that $\{\phi\neq 0\}\subset \subset \Omega_T$, where
we denote
\begin{align*}
E(u,A)=\int_{A}F(x,t,g_u)\,d \nu,
\end{align*}
 and $F: \Omega \times (0,T) \times \R \to \R $ satisfies the following assumptions:
\begin{enumerate}
\item $ (x,t)\mapsto F(x,t,\xi)$ is
  measurable for every $\xi$,
\item $\xi \mapsto F(x,t,\xi)$ is continuous for almost every $(x,t)$,
\item  there exist $0<c_1\leq c_2<\infty$ such that for every
  $\xi$ and almost every $(x,t)$, we have
\begin{equation*}
c_1 \abs{\xi}^p \leq F(x,t,\xi)\leq c_2
\abs{\xi}^p.\end{equation*}
\end{enumerate}
\end{definition}
As a consequence of the above, a parabolic quasiminimizer $u$ satisfies
\begin{align}\label{qmin}
\alpha \int_{\{\phi\neq 0\}} u \frac{\partial \phi}{\partial t}\,d\nu+\int_{\{\phi\neq 0\}} g_u^p\,d\nu \leq K \int_{\{\phi\neq 0\}} g_{u+\phi}^p\,d\nu,
\end{align}
with $K=c_2c_1^{-1}K'\geq 1$ and $\alpha=c_1^{-1}$, for every $\phi \in \textrm{Lip}(\Omega_T)$  such that $\{\phi\neq 0\}\subset \subset \Omega$. There is a subtle difficulty in proving an energy estimate
for parabolic quasiminimizers: one often needs a test function
depending on $u$ itself, but $u$ is a priori not necessarily in $\Lip(\Om_T)$ nor has compact support. We treat this difficulty
in the following manner. Consider a test function $\phi \in \Lip(\Omega_T)$ with compact support. Plugging in the test function $\phi(x,t+s)$ and conducting a change of variable in \eqref{qmin}, we see that there exists a constant $\varepsilon>0$ such that for every $-\varepsilon<s<\varepsilon$,
\begin{align*}
\alpha\int_{\{\phi\neq 0\}} u(x,t-s) \frac{\partial \phi}{\partial t}\,d\nu+\int_{\{\phi\neq 0\}} g_{u(x,t-s)}^p\,d\nu 
\leq K \int_{\{\phi\neq 0\}} g_{u(x,t-s)+\phi}^p\,d\nu.
\end{align*}
Let now $\zeta_{\eps}(s)$ be a standard
mollifier whose support is contained in $(-\eps,\eps)$. We multiply the above inequality with $\zeta_{\eps}(s)$ and integrate on both sides with respect to $s$, use Fubini's theorem to change the order of integration, and lastly use partial integration for the first term on the left hand side, to obtain
\begin{equation}\label{quasiminimizer inequality}
\begin{split}
-\alpha\int_{\{\phi\neq 0\}} \frac{\partial u_\varepsilon}{\partial t} \phi \,d\nu+\int_{\{\phi\neq 0\}}\left(g_{u}^p\right)_\eps \,d\nu 
\leq K \int_{\{\phi\neq 0\}} \left( g_{u(x,t-s)+\phi}^p\right)_\eps\,d\nu,
\end{split}
\end{equation}
for every compactly supported $\phi\in \Lip(\Omega_T)$. In the next section, we will derive the energy estimates starting from this last expression. We end this section by showing the following equivalence concerning the class of admissible test functions.

\begin{lemma} \label{density corollary} Assume $(X,d,\mu)$ is a complete doubling space supporting a $(1,p)$-Poincar\'e inequality, where $1<p<\infty$. Assume $u\in L^p_\textrm{loc}(0,T;N_{\textrm{loc}}^{1,p}(\Omega))$ is a parabolic quasiminimizer. Then inequality \eqref{quasiminimizer inequality} holds for every $\phi \in \Lip(\Omega_T)$, such that $\{\phi\neq 0\}\subset \subset \Omega_T$, if and only if it holds for every $\phi \in L^p(0,T;N^{1,p}(\Omega))\cap L^2(\Omega_T)$ such that $\{\phi\neq 0\}\subset \subset \Omega_T$.
\begin{proof}
Only the other direction needs a proof. Assume \eqref{quasiminimizer inequality} holds for every compactly supported $\phi \in \Lip(\Omega_T)$. Let $\psi \in L^p(0,T;N^{1,p}(\Omega))\cap L^2(\Omega_T)$ be  such that $\{\psi\neq 0\}\subset \subset \Omega_T$. Let $\varepsilon>0$. Let $\phi$ be a function in $\Lip(\Omega_T)$. 
By adding and substracting, and using Hölder's inquality, we can write the left hand side of \eqref{quasiminimizer inequality} as
\begin{align*}
&-\alpha\int_{\{\psi\neq 0\}} \frac{\partial u_\varepsilon}{\partial t} \psi \,d\nu+\int_{\{\psi\neq 0\}}\left(g_{u}^p\right)_\eps \,d\nu 
\leq \alpha \left(\int_{\{\phi\neq 0\}\cup \{\psi \neq 0\}}  \left|\frac{\partial u_\varepsilon}{\partial t}\right|^2\,d\nu\right)^\frac{1}{2}\\
&\cdot \left(\int_{\{\phi\neq 0\}\cup \{\psi \neq 0\}} |\psi-\phi|^2\,d\nu\right)^\frac{1}{2}-\left(\int_{\{\phi\neq 0\}\setminus\{\psi\neq 0\} }g_{u}^p \,d\nu \right)_\eps\\
&-\alpha\int_{\{\phi\neq 0\}} \frac{\partial u_\varepsilon}{\partial t} \phi \,d\nu+\int_{\{\phi\neq 0\}}\left(g_{u}^p\right)_\eps \,d\nu.
\end{align*}
On the other hand, using Minkowski's inequality we can write
\begin{align*}
 &\int_{\{\phi\neq 0\}} \left( g_{u(x,t-s)+\phi}^p\right)_\eps\,d\nu\leq  \left(\left(\left(\int_{\{\phi\neq 0\}}  (g_{u(x,t-s)+\psi}+g_{\psi-\phi})^p\,d\nu\right)^\frac{1}{p}\right)^p \right)_\varepsilon \\
&\quad\leq \left(\left(\left( \int_{\{\phi\neq 0\}\setminus \{\psi\neq 0\}}  g_{u(x,t-s)+\psi}^p\right)^\frac{1}{p}+ \left(\int_{\{\psi\neq 0\}}  g_{u(x,t-s)+\psi}^p\right)^\frac{1}{p}\right.\right.\\
&\qquad\left.\left.+ \left(\int_{\{\phi\neq 0\}} g_{\psi-\phi}^p\,d\nu \right)^\frac{1}{p}\right)^p\right)_\varepsilon.
\end{align*}
Since the above two expressions and inequality \eqref{quasiminimizer inequality} hold for every compactly supported $\phi \in \Lip(\Omega_T)$, by Corollary \ref{fortified convergence} we can take a sequence $(\phi_i)_i \subset \Lip(\Omega_T)$ such that
\begin{align*}
&\|\psi-\phi_i\|_{L^p(0,T;N^{1,p}(\Omega))}\rightarrow 0,\quad \|\psi-\phi_i\|_{L^2(\Omega_T)}\rightarrow 0\\
&\textrm{and }\quad \nu(\{\varphi_i\neq 0\}\setminus \{\psi \neq 0\})\rightarrow 0,
\end{align*}
as $i \rightarrow \infty$. Hence we see that inequality \eqref{quasiminimizer inequality} holds for $\psi$, with the same constant $K$.
\end{proof}
\end{lemma}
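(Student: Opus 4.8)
Only the implication ``the inequality for all admissible $\psi$ implies the inequality for all compactly supported $\phi\in\Lip(\Omega_T)$'' is trivial, since such $\phi$ lie in $L^p(0,T;N^{1,p}(\Omega))\cap L^2(\Omega_T)$ with $\{\phi\neq0\}\subset\subset\Omega_T$. So I would assume \eqref{quasiminimizer inequality} for every compactly supported $\phi\in\Lip(\Omega_T)$ and fix $\psi\in L^p(0,T;N^{1,p}(\Omega))\cap L^2(\Omega_T)$ with $\{\psi\neq0\}\subset\subset\Omega_T$. Using Corollary \ref{fortified convergence} together with the construction in Lemma \ref{density lemma} (which places the approximant inside a neighbourhood of $\{\psi\neq0\}$ whose measure exceeds that of $\{\psi\neq0\}$ by as little as we wish), I would choose $\phi_i\in\Lip(\Omega_T)$ with all $\{\phi_i\neq0\}$ contained in one fixed compact set $D\subset\subset\Omega_T$ with $\{\psi\neq0\}\subset D$, such that $\phi_i\to\psi$ in $L^p(0,T;N^{1,p}(\Omega))$ and in $L^2(\Omega_T)$, and $\nu(\{\phi_i\neq0\}\setminus\{\psi\neq0\})\to0$. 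The plan is to write \eqref{quasiminimizer inequality} for each $\phi_i$ and pass to the limit $i\to\infty$; since $\varepsilon$ is fixed and all integrands are supported in $D$, all the limits below may be taken under the mollification integral $\int\zeta_\varepsilon(s)\,ds$ by dominated convergence, using $g_u^p,\,g_\psi^p\in L^1(D)$.

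For the left-hand side of \eqref{quasiminimizer inequality} written for $\phi_i$, I would rewrite $\int_{\{\phi_i\neq0\}}(\partial u_\varepsilon/\partial t)\phi_i\,d\nu=\int_D(\partial u_\varepsilon/\partial t)\phi_i\,d\nu$ and likewise for $\psi$; since $\partial u_\varepsilon/\partial t$ belongs to $L^2(D)$ (a property of time-wise mollification) and $\phi_i\to\psi$ in $L^2$, Hölder's inequality shows this term converges to $\int_{\{\psi\neq0\}}(\partial u_\varepsilon/\partial t)\psi\,d\nu$. For the term $\int_{\{\phi_i\neq0\}}(g_u^p)_\varepsilon\,d\nu$ I would split the domain against $\{\psi\neq0\}$: the excess integral over $\{\phi_i\neq0\}\setminus\{\psi\neq0\}$ tends to $0$ by absolute continuity of the integral, while the defect $\int_{\{\psi\neq0\}\setminus\{\phi_i\neq0\}}(g_u^p)_\varepsilon\,d\nu$ also tends to $0$: for each $\eta>0$, on $\{\phi_i=0\}$ one has $|\psi|=|\psi-\phi_i|$, so Chebyshev's inequality and $\phi_i\to\psi$ in $L^2$ give $\nu(\{|\psi|\ge\eta\}\setminus\{\phi_i\neq0\})\to0$; hence the limit superior of the defect is at most $\int_{\{0<|\psi|<\eta\}}(g_u^p)_\varepsilon\,d\nu$, which vanishes as $\eta\to0$ by continuity from above. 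Thus the left-hand side for $\phi_i$ converges to the left-hand side for $\psi$.

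For the right-hand side I would use $g_{u(\cdot,\cdot-s)+\phi_i}\le g_{u(\cdot,\cdot-s)+\psi}+g_{\psi-\phi_i}$ $\nu$-a.e.\ and Minkowski's inequality on $\{\phi_i\neq0\}$; splitting the norm of $g_{u(\cdot,\cdot-s)+\psi}$ over $\{\phi_i\neq0\}$ into its parts on $\{\phi_i\neq0\}\setminus\{\psi\neq0\}$ (where $\psi=0$, so $g_{u(\cdot,\cdot-s)+\psi}=g_{u(\cdot,\cdot-s)}$ $\nu$-a.e.) and on $\{\phi_i\neq0\}\cap\{\psi\neq0\}\subset\{\psi\neq0\}$, one bounds $(\int_{\{\phi_i\neq0\}}g_{u(\cdot,\cdot-s)+\phi_i}^p\,d\nu)^{1/p}$ by $(\int_{\{\phi_i\neq0\}\setminus\{\psi\neq0\}}g_{u(\cdot,\cdot-s)}^p\,d\nu)^{1/p}+(\int_{\{\psi\neq0\}}g_{u(\cdot,\cdot-s)+\psi}^p\,d\nu)^{1/p}+\|g_{\psi-\phi_i}\|_{L^p(\Omega_T)}$. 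The first summand $\to0$ by absolute continuity (its domain has measure $\to0$) and the last $\to0$ by $\phi_i\to\psi$ in $L^p(0,T;N^{1,p}(\Omega))$; raising to the power $p$ (all quantities being bounded), mollifying, and integrating over $\{\phi_i\neq0\}$, the right-hand side for $\phi_i$ converges to $K\int_{\{\psi\neq0\}}(g_{u(\cdot,\cdot-s)+\psi}^p)_\varepsilon\,d\nu$. Passing to the limit in \eqref{quasiminimizer inequality} for $\phi_i$ then yields \eqref{quasiminimizer inequality} for $\psi$ with the same constant $K$.

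I expect the only real difficulty to be the bookkeeping around the domain $\{\phi\neq0\}$, which varies with the test function, combined with the fact that the density result only makes $\nu(\{\phi_i\neq0\}\setminus\{\psi\neq0\})$ small and not the other symmetric difference. Consequently the defect term $\int_{\{\psi\neq0\}\setminus\{\phi_i\neq0\}}(g_u^p)_\varepsilon\,d\nu$ cannot be disposed of by absolute continuity alone and has to be handled by splitting according to the size of $|\psi|$ and combining the $L^2$-convergence $\phi_i\to\psi$ with absolute continuity of $\int(g_u^p)_\varepsilon$; everything else reduces to Hölder's and Minkowski's inequalities and the convergences supplied by Corollary \ref{fortified convergence}.
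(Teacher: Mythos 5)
Your proposal is correct and follows essentially the same route as the paper: approximate $\psi$ by compactly supported Lipschitz functions $\phi_i$ via Corollary \ref{fortified convergence}, control the time-derivative term by H\"older's inequality and the upper-gradient term on the right-hand side by Minkowski's inequality, and pass to the limit in \eqref{quasiminimizer inequality}. You are in fact more careful than the paper on one point: the defect term $\int_{\{\psi\neq 0\}\setminus\{\phi_i\neq 0\}}(g_u^p)_\varepsilon\,d\nu$, which the paper's first display silently drops even though the density result only controls $\nu(\{\phi_i\neq 0\}\setminus\{\psi\neq 0\})$ and not the other part of the symmetric difference; your Chebyshev-plus-continuity-from-above argument disposes of it correctly.
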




\section{Estimates}
\label{sec:basic-estimates}

From now on we assume that $p=2$, and that $u\in L_{\textrm{loc}}^2(0,T;N_\textrm{loc}^{1,2}(\Omega))$ is a parabolic $K$-quasiminimizer. In this section we deduce from the definition of parabolic quasiminimizers a fundamental energy estimate for $u$. From this energy estimate we obtain a Caccioppoli inequality by the so called hole filling iteration. By combining the Caccioppoli inequality, a parabolic version of Poincar\'e's inequality derived from the fundamental energy estimate, and Sobolev's inequality,  we obtain a reverse Hölder inequality for $g_u$. Since $p=2$, no difficulties arise when combining the estimates together in proving the reverse Hölder inequality. Higher integrability then follows from the reverse Hölder inequality by a modification of Gehring's famous lemma.

We begin by establishing a fundamental energy estimate by choosing a suitable test function in the definition of parabolic quasiminimizers.
\begin{lemma}[Fundamental energy estimate]
\label{energy estimate near the initial boundary}
There exists a positive constant $
c=c(K)$, such that for every $Q_{\rho}=B_\rho(x_0) \times \Lambda_\rho(t_0)$, $\rho<\sigma$ such that $Q_\sigma\subset \Omega_T$, we have
\[
\begin{split}
 &\esssup_{t \in
  \Lambda_{\rho}} \int_{B_{\rho}} |u-u_{\sigma}(t)|^2 \,d \mu +\int_{Q_{\rho}}  g_u^2 \,d \nu \\
&\leq
c \int_{(\q{\sigma}\setminus \q{\rho})} g_u^2 \,d \nu
+\frac{c}{(\sigma-\rho)^2}\frac{\mu(B_\sigma)}{\mu(B_\rho)}\int_{\q{\sigma}}|u-u_{\sigma}(t)|^2
\,d \nu.
\end{split}
\]

\begin{proof}
Assume $Q_\rho=B_\rho(x_0)\times \Lambda_\rho(t_0)$, and $\rho<\sigma$ are such that $B_\sigma(x_0) \subset \Omega$ and $\Lambda_\rho \subset (0,T)$. Let $t'\in \Lambda_\rho(t_0)$. Define
 \begin{align*}
 &\chi_h(t) = 
 \begin{cases}
 0,& t>t'\\
\frac{t'-t}{h},&t'-h\leq t \leq t',\\
 1,& t \leq t'-h.
\end{cases}
 \end{align*}
Let $\varphi_1\in N^{1,2}(B_\sigma)$, $0\leq \varphi_1\leq 1$, be such that  $\varphi_1=1$ in $B_\rho(x_0)$, $\spt \varphi_1$ is a compact subset of  $B_\sigma(x_0)$, and
\begin{align*}
g_{\varphi_1}^2\leq \frac{c}{(\sigma-\rho)^2}.
\end{align*}
Define then $\varphi_2\in \textrm{Lip}(0,T)$ such that $0\leq \varphi_2 \leq 1$ and $|\partial \varphi_2 /\partial t| \leq c(\sigma-\rho)^{-2}$, by setting
\begin{align*}
 \varphi_2(t) = 
 \begin{cases}
 1,&  t\geq t_0-\frac{\rho^2}{2},\\
 \frac{2t-2t_0+\sigma^2}{\sigma^2-\rho^2}, & t_0-\frac{\sigma^2}{2}< t< t_0-\frac{\rho^2}{2},\\
 0, &   t\leq t_0-\frac{\rho^2}{2}.
\end{cases} 
\end{align*}
Set now $\varphi(x,t)=\varphi_1(x)\varphi_2(t)$.
For a function $f(x,t)$, define for every $t\in (0,T)$
\begin{align}\label{weighted average}
f_{\sigma}^\varphi(t)=
\frac{\int_{B_\sigma} f(x,t) \varphi(x,t)\, d\mu}{\int_{B_\sigma} \varphi(x,t) \, d\mu}=\frac{\int_{B_\sigma} f(x,t) \varphi_1(x)\, d\mu}{\int_{B_\sigma} \varphi_1(x) \, d\mu},
\end{align}
where when $t\not \in\Lambda_\sigma(t_0)$ we use the right hand side expression as  the definition. Choose $\phi=\varphi(u_{\eps}-(u_\varepsilon)_\sigma^\varphi)\chi_{h}$. Since $u$ is a parabolic quasiminimizer  and $\phi\in L^2(0,T;N^{1,2}(\Omega))$ is compactly supported, by Lemma \ref{density corollary}  we can insert $\phi$ as a test function into inequality \eqref{quasiminimizer inequality} and examine the resulting terms. In the first term on the left hand side, after performing partial integration, we add and subtract
$(u_\eps)_{\sigma}^\varphi (\partial \phi/\partial t)$ to obtain
\begin{equation}\label{valivaihe}
\begin{split}
&\int_{\Om_T} u_{\eps}  \parts{\phi}{t} \,d \nu=\int_{\Om_T}
(u_{\eps}-(u_\varepsilon)_\sigma^\varphi(t)) \parts{\phi }{t} \,d \nu+\int_{\Om_T}(u_\varepsilon)_\sigma^\varphi(t)\parts{\phi }{t} \,d \nu.
\end{split}
\end{equation}
Integrating by parts and using the definition of
$(u_\varepsilon)_\sigma^\varphi(t)$, we
  see that the last term on the right hand side vanishes
\[
 \begin{split}
&\int_{\Om_T}(u_\varepsilon)_\sigma^\varphi(t) \parts{\phi}{t} \,d \nu\\
&=
 -\int_{t_0-\frac{\sigma^2}{2}}^{t'} \parts{}{t} (u^{\varphi}_{\sigma,\eps}(t))\l(\int_{B_{\sigma}}u_{\eps} \varphi \,d \mu - \frac{\int_{B_{\sigma}}
 \varphi\,d \mu \int_{B_{\sigma}} u_{\eps} \varphi \,d
 \mu}{\int_{B_{\sigma}}\varphi \,d \mu}\r) \chi_{h}(t)\,d t
=0.
 \end{split}
\]
Then we integrate the  first term on the right side of \eqref{valivaihe}
by parts, and so we obtain for every $h$ small enough
\begin{equation*}
\begin{split}
&\int_{\Om_T} u_{\eps}  \parts{\phi}{t} \,d \nu=\frac{1}{2}\int_{\Om_T}
(u_{\eps}-(u_\varepsilon)_\sigma^\varphi(t))^2\parts{}{t}\left(\varphi\chi_{h}\right) \,d \nu
\\
&=\frac{1}{2h} \int_{t'-h}^{t'}
\int_{B_{\sigma}}\abs{u_\eps(x,t)-(u_\eps)_{\sigma}^{\varphi}(t)}^2\varphi
 \,d \mu\,dt\\
 &\quad-\frac{1}{2}\int_{\Om_T}
(u_{\eps}-(u_\varepsilon)_\sigma^\varphi(t))^2\chi_h\parts{\varphi}{ t} \,d \nu.
\end{split}
\end{equation*}
Now,  letting first $\eps\to 0$ and then $h\to 0$, we obtain
\begin{align*}
&\liminf_{\varepsilon,h\rightarrow 0} \int_{\Om_T} u_{\eps} \parts{ \phi}{t} \,d \nu\\
&\quad\geq\frac{1}{2} 
\int_{B_{\sigma}}\abs{u(x,t')-u_{\sigma}^{\varphi}(t')}^2\varphi(x,t')\,d \mu
-\frac{1}{2}\int_{\Om_T}
(u-u_\sigma^\varphi(t))^2\left|\parts{\varphi}{ t}\right| \,d \nu\\
&\quad\geq \frac{1}{2} 
\int_{B_{\sigma}}\abs{u(x,t')-u_{\sigma}^{\varphi}(t')}^2\varphi(x,t')\,d \mu-\frac{1}{(\sigma-\rho)^2}\int_{Q_\sigma}
(u-u_\sigma^\varphi(t))^2 \,d \nu.
\end{align*}
On the right hand side of inequality \eqref{quasiminimizer inequality}, we note that for every $h,\eps>0$ small enough, in the set $\{\phi\neq 0\}$ we have
\begin{align*}
&(g_{u(\cdot,\cdot-s)-\varphi(u_\eps-(u_\eps)_\sigma^\varphi)\chi_{h}}^2)_\eps\leq c (g^2_{u(\cdot,\cdot-s)-u})_\eps+ cg^2_{u-\varphi(u-u_\sigma^\varphi)}\\&\qquad+cg^2_{\varphi(u-u_\sigma^\varphi)}(1-\chi_{h})^2
+cg_\varphi^2((u_\eps)_\sigma^\varphi-u_\sigma^\varphi)^2\chi_{h}^2\\&\qquad+c(u-u_\eps)^2 g_{\varphi}^2 \chi_{h}^2+c\varphi^2 g_{u-u_\eps}^2 \chi_{h}^2.
\end{align*}
By Lemma \ref{convergence of upper gradient}, we know that $g_{u-u_\eps}^2\rightarrow 0$  and $g_{u(\cdot,\cdot-s)-u}^2\rightarrow 0$ in $L_{\textrm{loc}}^1(\Omega_T)$ as $\eps\rightarrow 0$. Also, by \eqref{weighted average} we have for every small enough $\varepsilon$
\begin{align*}
\int_{\{\phi\neq 0\}}(u_\sigma^\varphi-(u_\eps)_\sigma^\varphi)^2\, d\nu\leq \mu(\Omega)\int_{t_0-\frac{\sigma^2}{2}}^{t'}(u_\sigma^\varphi(t)-(u_\sigma^\varphi)_\eps(t))^2\,dt\\
\leq \mu(\Omega)\int_{-\varepsilon}^{\varepsilon} \int_{t_0-\frac{\sigma^2}{2}}^{t'}|u_\sigma^\varphi(t)-u_\sigma^\varphi(t-s)|^2\,dt \, \zeta_{\eps}(s) \,ds,
\end{align*}
and therefore the fact that $u_{\sigma}^\varphi\in L^2(0,T)$ implies by the continuity of translation, that the above expression tends to zero as $\eps \rightarrow 0$. We obtain
\begin{align*}
\limsup_{\varepsilon,  h\rightarrow 0}\int_{\{\phi \neq 0\}} \left(g_{u(\cdot,\cdot-s)-\phi}^2 \right)_\varepsilon\,d\nu \leq c\int_{Q_\sigma}g_{u-\varphi(u-u_\sigma^\varphi)}^2 \, d\mu\, dt.
\end{align*}
Next we note that since $u_\sigma^\varphi$ does not depend on $x$, and hence its upper gradient vanishes, we can write, after noting that $\phi=1$ in $Q_\rho$,
\begin{align*}
\int_{Q_\sigma}g_{u-\varphi(u-u_\sigma^\varphi)}^2 \, d\nu \leq c\int_{Q_\sigma}|1-\varphi|^2g_{u}^2 \, d\nu +c\int_{Q_\sigma}|u-u_\sigma^\varphi(t)|^2g_{\varphi}^2 \, d\nu\\
\leq c\int_{Q_\sigma\setminus Q_\rho}g_{u}^2 \, d\nu+\frac{c}{(\sigma-\rho)^2}\int_{\q{\sigma}}|u-u_{\sigma}^\varphi(t)|^2
\,d \nu.
\end{align*}
Since $t'$ was assumed to be arbitrary in $\Lambda_\rho(t_0)$ and the constants in the estimates are independent of $t'$, combining the above expressions through \eqref{quasiminimizer inequality}, and remembering the definition of $\varphi$, leads us to the estimate
\[
\begin{split}
 &\esssup_{t \in
  \Lambda_{\rho}} \int_{B_{\rho}} |u-u_{\sigma}^{\varphi}(t)|^2 \,d \mu +\int_{Q_{\rho}}  g_u^2 \,d \nu \\
&\quad\leq
c \int_{(\q{\sigma}\setminus \q{\rho})} g_u^2 \,d \nu
+\frac{c}{(\sigma-\rho)^2}\int_{\q{\sigma}}|u-u_{\sigma}^\varphi(t)|^2
\,d \nu,
\end{split}
\]
where $c=c(K)$. We complete the  proof by noting that for any $t\in (0,T)$, we have
\begin{align*}
\int_{B_\rho}&|u-u_{\sigma}(t)|^2\,d\mu\\
 &\leq 2\int_{B_\rho}|u-u_{\sigma}^\varphi(t)|^2\,d\mu+2\int_{B_\rho}\left(\vint_{B_{\sigma}}|u_{\sigma}^\varphi(t)-u|^2\,d\mu \right)\,d\mu\\
 &\leq  4\int_{B_\rho}|u-u_{\sigma}^\varphi(t)|^2\,d\mu.
 \end{align*}
On the other hand, by the triangle inequality and by Jensen's inequality, and since $\varphi_1=1$ in $B_\rho$,
 \begin{align*}
 \int_{B_{\sigma}}&|u-u_{\sigma}^\varphi(t)|^2 \, d\mu
\leq 2\int_{B_{\sigma}}|u-u_{\sigma}(t)|^2 \, d\mu\\&+2\int_{B_{\sigma}}\left( \left(\int_{B_{\sigma}}\varphi_1\,d\mu\right)^{-1}\int_{B_{\sigma}}|u_{\sigma}(t)-u|^2\varphi_1 \,d\mu\right) \, d\mu\\
 &\leq 4\frac{\mu(B_{\sigma})}{\mu(B_\rho)} \int_{B_{\sigma}}|u-u_{\sigma}(t)|^2 \, d\mu.
 \end{align*}
\end{proof}
\end{lemma}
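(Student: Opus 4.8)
The plan is to obtain this as a Caccioppoli-type estimate: insert into the quasiminimizer inequality \eqref{quasiminimizer inequality} a test function which is, roughly, a cutoff times $u$ recentered at its spatial average, with an additional Lipschitz cutoff in time to capture the essential supremum. Concretely, I would fix $t'\in\Lambda_\rho(t_0)$, choose a spatial cutoff $\varphi_1\in N^{1,2}(B_\sigma)$ with $0\le\varphi_1\le1$, $\varphi_1=1$ on $B_\rho$, $\spt\varphi_1\subset\subset B_\sigma$ and $g_{\varphi_1}^2\le c(\sigma-\rho)^{-2}$, a time cutoff $\varphi_2\in\Lip(0,T)$ with $0\le\varphi_2\le1$ equal to $1$ for $t\ge t_0-\rho^2/2$ and $0$ for $t\le t_0-\sigma^2/2$ and $|\partial_t\varphi_2|\le c(\sigma-\rho)^{-2}$, set $\varphi=\varphi_1\varphi_2$, and let $\chi_h$ be the piecewise-linear-in-time function equal to $1$ for $t\le t'-h$ and $0$ for $t\ge t'$. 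Writing $(u_\varepsilon)_\sigma^\varphi(t)$ for the $\varphi_1$-weighted spatial average over $B_\sigma$ as in \eqref{weighted average} and $u_\varepsilon$ for the time mollification, I would test \eqref{quasiminimizer inequality} with $\phi=\varphi\,(u_\varepsilon-(u_\varepsilon)_\sigma^\varphi)\chi_h$; this is legitimate because $\phi\in L^2(0,T;N^{1,2}(\Omega))$ is compactly supported and Lemma \ref{density corollary} enlarges the admissible test class accordingly. The weighted (rather than plain) average is chosen precisely so that the contribution $\int (u_\varepsilon)_\sigma^\varphi(t)\,\partial_t\phi\,d\nu$ vanishes after integration by parts.

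For the parabolic term $\int u_\varepsilon\,\partial_t\phi\,d\nu$, after integrating by parts and discarding the vanishing weighted-average piece, what remains is $\tfrac12\int (u_\varepsilon-(u_\varepsilon)_\sigma^\varphi)^2\,\partial_t(\varphi\chi_h)\,d\nu$. Letting $\varepsilon\to0$ (using $L^2$-continuity of time translation, since $u_\sigma^\varphi\in L^2(0,T)$) and then $h\to0$, the $\partial_t\chi_h$ part produces $\tfrac12\int_{B_\sigma}|u(\cdot,t')-u_\sigma^\varphi(t')|^2\varphi(\cdot,t')\,d\mu$; since $t'\in\Lambda_\rho$ is arbitrary and $\varphi=1$ on $B_\rho$ this yields the essential supremum term, while the $\partial_t\varphi$ part is bounded by $c(\sigma-\rho)^{-2}\int_{Q_\sigma}|u-u_\sigma^\varphi(t)|^2\,d\nu$. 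For the right-hand side of \eqref{quasiminimizer inequality}, I would expand $g_{u(\cdot,\cdot-s)+\phi}$ using subadditivity of minimal upper gradients and the Leibniz-type bound $g_{\varphi w}\le|\varphi|g_w+|w|g_\varphi$, producing a main term plus error terms of the form $(g^2_{u(\cdot,\cdot-s)-u})_\varepsilon$, $\varphi^2 g^2_{u-u_\varepsilon}$, $g_\varphi^2\bigl((u_\varepsilon)_\sigma^\varphi-u_\sigma^\varphi\bigr)^2$, and $(u-u_\varepsilon)^2 g_\varphi^2$; each of these tends to $0$ as $\varepsilon\to0$ by Lemma \ref{convergence of upper gradient} and the $L^2$-convergence $(u_\varepsilon)_\sigma^\varphi\to u_\sigma^\varphi$. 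The surviving term $c\int_{Q_\sigma}g^2_{u-\varphi(u-u_\sigma^\varphi)}\,d\nu$ is handled by noting that $u_\sigma^\varphi(t)$ does not depend on $x$, so $g_{u-\varphi(u-u_\sigma^\varphi)}\le|1-\varphi|g_u+|u-u_\sigma^\varphi|g_\varphi$, which, using $\varphi=1$ on $Q_\rho$, is controlled by $c\int_{Q_\sigma\setminus Q_\rho}g_u^2\,d\nu+c(\sigma-\rho)^{-2}\int_{Q_\sigma}|u-u_\sigma^\varphi(t)|^2\,d\nu$.

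Collecting the two sides through \eqref{quasiminimizer inequality} and absorbing $\int_{Q_\rho}g_u^2\,d\nu$ on the left, I obtain the asserted inequality but with the weighted average $u_\sigma^\varphi(t)$ in place of the plain average $u_\sigma(t)=u_{B_\sigma}(t)$. The final step is to exchange them: since $\varphi_1=1$ on $B_\rho$, the triangle inequality and Jensen's inequality give $\int_{B_\rho}|u-u_\sigma(t)|^2\,d\mu\le 4\int_{B_\rho}|u-u_\sigma^\varphi(t)|^2\,d\mu$ for the left-hand side, and $\int_{B_\sigma}|u-u_\sigma^\varphi(t)|^2\,d\mu\le 4\,\mu(B_\sigma)\mu(B_\rho)^{-1}\int_{B_\sigma}|u-u_\sigma(t)|^2\,d\mu$ for the right-hand side; this is exactly where the factor $\mu(B_\sigma)/\mu(B_\rho)$ enters.

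I expect the main obstacle to be the bookkeeping of the double limit $\varepsilon\to0$, $h\to0$: one must verify that every error term generated both by the time mollification and by the expansion of $g^2_{u(\cdot,\cdot-s)+\phi}$ genuinely vanishes, which leans on Lemma \ref{convergence of upper gradient} and $L^2$-continuity of translation. A secondary delicate point is the choice of the $\varphi_1$-weighted spatial average; without it the term involving $(u_\varepsilon)_\sigma^\varphi\,\partial_t\phi$ does not drop out and the scheme fails to close.
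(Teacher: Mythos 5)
Your proposal follows essentially the same route as the paper's proof: the same test function $\varphi(u_\varepsilon-(u_\varepsilon)_\sigma^\varphi)\chi_h$ with the $\varphi_1$-weighted spatial average chosen so the average term drops out after integration by parts, the same double limit $\varepsilon\to0$, $h\to0$ with the error terms controlled by Lemma \ref{convergence of upper gradient} and continuity of translation, the same bound on $g_{u-\varphi(u-u_\sigma^\varphi)}$, and the same final exchange of the weighted for the plain average producing the factor $\mu(B_\sigma)/\mu(B_\rho)$. This matches the paper's argument in all essentials (note only that no absorption of $\int_{Q_\rho}g_u^2\,d\nu$ is actually needed at this stage, since $\varphi=1$ on $Q_\rho$ already confines the gradient term on the right to $Q_\sigma\setminus Q_\rho$; the hole-filling absorption is deferred to the Caccioppoli lemma).
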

Having obtained the energy estimate, we use the so called hole filling iteration \cite{Widm71} to extract a Caccioppoli inequality from it.
\begin{lemma}[Caccioppoli]
\label{caccioppoli near the initial boundary}
There exists a positive constant $c=c(c_\mu,K)$ so that for any $Q_\rho=B_\rho(x_0)\times \Lambda_\rho(t_0)$ such that $Q_{2\rho}\subset \Omega_T$, we have
\[
\label{eq:cacckvasi}
\begin{split}
\int_{Q_{\rho}} g_u^2 \, d \nu &\leq
\frac{c}{\rho^2}\int_{\q{2\rho}}|u-u_{2\rho}(t)|^2  \, d \nu.
\end{split}
\]

\begin{proof} By Lemma \ref{energy estimate near the initial boundary}, for any cylinder $Q_\rho=B_\rho(x_0)\times \Lambda_\rho(t_0)$ such that $Q_{2\rho}\subset \Omega_T$, we have for any $\rho<\sigma\leq 2\rho$,
\[
\begin{split}
 &\esssup_{t \in
  \Lambda_{\rho}} \int_{B_{\rho}} |u-u_{\sigma}(t)|^2 \,d \mu +\int_{Q_{\rho}}  g_u^2 \,d \nu \\
&\leq
c \int_{(\q{\sigma}\setminus \q{\rho})} g_u^2 \,d \nu
+\frac{c}{(\sigma-\rho)^2}\frac{\mu(B_{\sigma})}{\mu(B_\rho)}\int_{\q{\sigma}}|u-u_{\sigma}(t)|^2
\,d \nu
\end{split}
\]
where $c=c(K)$. We add $c\int_{Q_\rho} g_u^2\,d\nu$ to both sides of the expression, and divide by $1+c$, to obtain
 \begin{align*}
 &\int_{Q_\rho} g_u^2 \, d\nu\leq  \frac{c}{1+c} \int_{Q_{\sigma}} g_u^2 \,d\nu+\frac{c}{(1+c)(\sigma-\rho)^2}\frac{\mu(B_{\sigma})}{\mu(B_\rho)}\int_{Q_{\sigma}}|u-u_{\sigma}(t)|^2
\, d \nu.
 \end{align*}
Then we choose 
 \begin{align*}
 \rho_0=\rho,\quad \rho_i-\rho_{i-1}=\frac{1-\beta}{\beta}\beta^i \rho, \quad i=1,2,\dots,k,\quad \beta^2=\frac{1}{2}\left(\frac{c}{1+c}+1\right),
 \end{align*}
 replace $\rho$ by $\rho_{i-1}$ and $\sigma$ by $\rho_i$, and iterate, to have
 \begin{align*}
 \int_{Q_\rho} g_u^2 \, d\nu&\leq  \left(\frac{c}{1+c}\right)^k \int_{Q_{\rho_k}} g_u^2 \,d\nu\\
 &+\sum_{i=1}^{k}\left(\frac{c}{1+c}\right)^i\frac{\mu(B_{\rho_i})}{\mu(B_{\rho_{i-1}})}\frac{1}{(\rho_i-\rho_{i-1})^2}\int_{Q_{\rho_i}}|u-u_{\rho_i}(t)|^2
\, d \nu.
 \end{align*}
 Here among other things $\rho_i\leq 2\rho_{i-1}$ for every $i$, and so by the doubling property  of $\mu$, the ratio $\mu(B_{\rho_i})/\mu(B_{\rho_{i-1}})$ is uniformly bounded. Also, for each $i$ we can estimate  after using Fubini's theorem,
 \begin{align*}
 \int_{Q_{\rho_i}} |u-u_{\rho_i}(t)|^2\,d\nu&\leq 2 \int_{Q_{2\rho}} |u-u_{2\rho}(t)|^2\,d\nu+ 2 \int_{Q_{2\rho}} \vint_{B_{\rho_i}}|u_{2\rho}(t)-u|^2\,d\nu\\
 &\leq 2c \int_{Q_{2\rho}}|u-u_{2\rho}(t)|^2\,d\nu,
 \end{align*}
where $c=c(c_\mu)$. Hence, taking the limit $k\rightarrow \infty$ yields the estimate
 \begin{align*}
\int_{Q_\rho} g_u^2 \, d\nu\leq \frac{c}{\rho^2}\int_{Q_{2\rho}} |u-u_{2\rho}(t)|^2
\, d \nu, 
 \end{align*} 
 where $c=c(c_\mu,K)$.
\end{proof}
\end{lemma}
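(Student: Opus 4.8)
The plan is to derive the Caccioppoli inequality from the fundamental energy estimate of \lemref{energy estimate near the initial boundary} via Widman's hole-filling device followed by an iteration over a geometric sequence of radii. First I would discard the nonnegative $\esssup$-term in the energy estimate: for $\rho<\sigma\le 2\rho$ with $Q_{2\rho}\subset\Omega_T$ this leaves
\[
\int_{Q_\rho} g_u^2\,d\nu \le c\int_{Q_\sigma\setminus Q_\rho} g_u^2\,d\nu + \frac{c}{(\sigma-\rho)^2}\frac{\mu(B_\sigma)}{\mu(B_\rho)}\int_{Q_\sigma}|u-u_\sigma(t)|^2\,d\nu,\qquad c=c(K).
\]
Adding $c\int_{Q_\rho} g_u^2\,d\nu$ to both sides (filling the hole) and dividing by $1+c$ gives
\[
\int_{Q_\rho} g_u^2\,d\nu \le \theta\int_{Q_\sigma} g_u^2\,d\nu + \frac{c}{(1+c)(\sigma-\rho)^2}\frac{\mu(B_\sigma)}{\mu(B_\rho)}\int_{Q_\sigma}|u-u_\sigma(t)|^2\,d\nu,
\]
with $\theta:=c/(1+c)<1$; the gain is that the first term on the right is now an integral over the whole cylinder $Q_\sigma$, carrying a genuine contraction factor $\theta<1$.

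Next I would iterate this inequality along radii $\rho_0=\rho$ and $\rho_i-\rho_{i-1}=\frac{1-\beta}{\beta}\beta^i\rho$ for $i\ge 1$, where $\beta\in(0,1)$ is fixed by $\beta^2=\frac12(\theta+1)$, so that $\theta<\beta^2<1$. Two things are bought by this choice: the increments telescope, $\sum_{i\ge1}(\rho_i-\rho_{i-1})=\rho$, so every $\rho_i$ lies in $[\rho,2\rho]$, whence $Q_{\rho_i}\subset Q_{2\rho}\subset\Omega_T$ and $\rho_i\le 2\rho_{i-1}$; and since $\theta^i(\rho_i-\rho_{i-1})^{-2}=\frac{\beta^2}{(1-\beta)^2\rho^2}(\theta\beta^{-2})^i$ with $\theta\beta^{-2}<1$, the series produced by the iteration is summable. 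Replacing $\rho$ by $\rho_{i-1}$ and $\sigma$ by $\rho_i$ and iterating $k$ times yields
\[
\int_{Q_\rho} g_u^2\,d\nu \le \theta^k\int_{Q_{\rho_k}} g_u^2\,d\nu + \sum_{i=1}^{k}\theta^i\,\frac{\mu(B_{\rho_i})}{\mu(B_{\rho_{i-1}})}\,\frac{c}{(1+c)(\rho_i-\rho_{i-1})^2}\int_{Q_{\rho_i}}|u-u_{\rho_i}(t)|^2\,d\nu,
\]
and $\rho_i\le 2\rho_{i-1}$ together with the doubling property bounds $\mu(B_{\rho_i})/\mu(B_{\rho_{i-1}})\le c_\mu$ uniformly in $i$.

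It then remains to replace $u_{\rho_i}(t)$ by $u_{2\rho}(t)$ uniformly in $i$: writing $u_{2\rho}(t)-u_{\rho_i}(t)=\vint_{B_{\rho_i}}(u_{2\rho}(t)-u)\,d\mu$, Jensen's inequality, Fubini's theorem and $Q_{\rho_i}\subset Q_{2\rho}$ give $\int_{Q_{\rho_i}}|u-u_{\rho_i}(t)|^2\,d\nu\le c\int_{Q_{2\rho}}|u-u_{2\rho}(t)|^2\,d\nu$ with $c=c(c_\mu)$. Inserting this and letting $k\to\infty$, the term $\theta^k\int_{Q_{\rho_k}}g_u^2\,d\nu$ vanishes because $\theta<1$ and $\int_{Q_{2\rho}}g_u^2\,d\nu<\infty$ (as $u\in L^2_{\textrm{loc}}(0,T;N^{1,2}_{\textrm{loc}}(\Omega))$ and $Q_{2\rho}\subset\Omega_T$), while $\sum_{i\ge1}(\theta\beta^{-2})^i$ converges to a finite constant depending only on $c_\mu$ and $K$, giving exactly
\[
\int_{Q_\rho} g_u^2\,d\nu\le\frac{c}{\rho^2}\int_{Q_{2\rho}}|u-u_{2\rho}(t)|^2\,d\nu,\qquad c=c(c_\mu,K).
\]
The only delicate point — the hard part, such as it is — is the bookkeeping in the iteration: one must choose $\beta$ so that the radii stay $\le 2\rho$ while simultaneously the contraction $\theta<1$ dominates the blow-up $(\rho_i-\rho_{i-1})^{-2}$ of the coefficients, so that the resulting geometric series converges. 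Everything else is a routine application of the energy estimate, the doubling property, and Jensen's inequality.
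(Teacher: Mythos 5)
Your proposal is correct and follows essentially the same route as the paper: hole-filling applied to the fundamental energy estimate, iteration along the radii $\rho_i$ with $\beta^2=\tfrac12(\theta+1)$, the doubling property to control $\mu(B_{\rho_i})/\mu(B_{\rho_{i-1}})$, and Jensen's inequality with Fubini to replace $u_{\rho_i}(t)$ by $u_{2\rho}(t)$ before letting $k\to\infty$. Your explicit bookkeeping of the summability condition $\theta\beta^{-2}<1$ and of the vanishing of the $\theta^k$-term is a welcome elaboration of what the paper leaves implicit, but it is not a different argument.
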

Now we combine the Caccioppoli inequality, the fundamental energy estimate, Poincar\'e's inequality and Sobolev's inequality together to prove a reverse Hölder inequality type estimate for the upper gradient of $u$. 
\begin{lemma}[Reverse Hölder inequality]\label{reverse hölder away from boundary}
There exists a positive constant $c=c(c_\mu,c_P,\lambda, K)$, and a $1<q<2$, so that for any $Q_\rho=B_\rho(x_0)\times \Lambda_\rho(t_0)$, such that $Q_{2\lambda\rho}\subset \Omega_T$ , we have
 \begin{align*}
 &\vint_{Q_{\rho}} g_u^2 \, d \nu\leq \varepsilon c\vint_{\q{2\lambda\rho}} g_u^2 \,d \nu+2\varepsilon^{-1}c\left(\vint_{Q_{2\lambda \rho}} g_u^q\,d\nu\right)^\frac{2}{q}.
 \end{align*}
 \begin{proof}
 By the Caccioppoli Lemma \ref{caccioppoli near the initial boundary}, by the doubling property of $\mu$ and since $\nu(Q_{\rho})=\rho^2\mu(B_\rho)$, we obtain
 \begin{align*}
\vint_{Q_{\rho}} g_u^2 \, d \nu 
\leq \frac{c}{\rho^4}\int_{\Lambda_\rho}\vint_{B_{2\rho}}|u-u_{2\rho}|^2  \, d \mu\,dt,
 \end{align*}
 where $c=c(c_\mu,c_P,K)$. On the other hand we can write
 \begin{align*}
 &\frac{c}{\rho^4}\int_{\Lambda_\rho}\vint_{B_{2\rho}}|u-u_{2\rho}|^2  \, d \mu\,dt \leq \left(\frac{c}{\rho^2}\esssup_{t \in
  \Lambda_{\rho}} \vint_{B_{\rho}} |u-u_{2\rho}|^2 \, d \mu\right)^{1-\frac{q}{2}}\\
& \qquad\cdot \frac{c}{\rho^2} \int_{\Lambda_\rho}\left(\frac{c}{\rho^2} \vint_{B_{2\rho}}|u-u_{2\rho}|^2  \, d \mu\right)^\frac{q}{2}\,dt\\
&   \leq \left\{\vint_{Q_{2\rho}} g_u^2\,d\nu+c\vint_{\q{2\lambda\rho}} g_u^2 \,d \nu\right\}^{1-\frac{q}{2}} \vint_{Q_{2\lambda \rho}} g_u^q\,d\nu, 
 \end{align*}
 were we used the fundamental energy estimate Lemma \ref{energy estimate near the initial boundary} and the $(2,2)$-Poincaré inequality (see Remark \ref{poincare_remark}) for the former term and the Sobolev type $(2,q)$-Poincaré inequality for the latter term. By the $\varepsilon$-Young inequality we now obtain for every positive $\varepsilon$
 \begin{align*}
 &\vint_{Q_{\rho}} g_u^2 \, d \nu\leq \varepsilon c\vint_{\q{2\lambda\rho}} g_u^2 \,d \nu+\varepsilon^{-1}c\left(\vint_{Q_{2\lambda \rho}} g_u^q\,d\nu\right)^\frac{2}{q},
 \end{align*}
 where $c=c(c_\mu,c_P, \lambda,K)$.
 \end{proof}
\end{lemma}
\section{Establishing higher integrability}
Having established a reverse Hölder inequality for $u$, the remaining part of proving local higher integrability for the upper gradient of $u$ is abstract in nature, and does not make additional use of the assumption that $u$ is a quasiminimizer. We use the following modification of Gehring's lemma.
\begin{theorem}
\label{global gehring}
Let $g\in L^2_{\trm{loc}}(\Omega_T)$  be a non negative measurable functions defined in $\Omega_T$. Let $s$ be the constant from \eqref{eq:DoublingConsequence} and let $q$ be such that $2s/(2+s)<q<2$. Consider a parabolic cylinder $Q_{2R}(z_0)\subset \Omega_T$.  Suppose that there exists a positive constant $A>1$, for which with any $z'=(x',t')$ and $\rho$ such that $Q_{A\rho}(z')\subset Q_{2R}(z_0)$, we have
\begin{equation} 
\label{gekaehto}
\begin{split}
&\vint_{Q_\rho(z')}  g^2 \, d \nu
\leq
\eps\vint_{Q_{A \rho}(z')} g^2
\,d \nu+\gamma\l(\vint_{Q_{A \rho}(z')} g^{{q}} \,d \nu \r)^{2/{q}},
\end{split}
\end{equation}
for any $\eps>0$, where $\gamma$ may depend on $\eps$.
 Then there exists positive constants $\eps_0=\eps_0(c_\mu, A, \gamma, q)$  and $c=c(c_\mu, A,\gamma)$ such that 
\[
\begin{split}
&\left(\vint_{Q_{R}(z_0)} g^{2+\eps} \,d \nu\right)^\frac{1}{2+\eps}\leq c \l(\vint_{Q_{2R}(z_0)} g^2 \,d \nu\r)^{\frac{1}{2}},
\end{split}
\]
for every $0<\eps\leq \eps_0$.
\end{theorem}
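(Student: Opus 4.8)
This is a Gehring-type (generalized Giaquinta--Modica) self-improvement lemma, and the plan is to run the classical Calderón--Zygmund argument in the doubling metric setting. First I would transport the problem to the parabolic quasi-metric space: equip $X\times\R$ with $d_P((x,s),(y,t))=\max\{d(x,y),|s-t|^{1/2}\}$, so that each cylinder $Q_\rho(z)$ is comparable to a $d_P$-ball; since $\nu(Q_\rho)=\mu(B_\rho)\rho^2$, the measure $\nu$ is doubling for $d_P$ with doubling exponent $s+2$, by \eqref{eq:DoublingConsequence}. Hence $\nu$ carries a system of dyadic-type cubes (in the sense of Christ) with the attendant stopping-time machinery, and the Lebesgue differentiation theorem holds. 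By homogeneity of \eqref{gekaehto} in $g$ I may normalize $\vint_{Q_{2R}(z_0)}g^2\,d\nu=1$. The decisive feature is that \eqref{gekaehto} is assumed for \emph{every} $\eps>0$, with $\gamma=\gamma(\eps)$, so $\eps$ is at my disposal and will be fixed small along the way.

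The core is a good-$\lambda$ inequality, which I would establish for every pair of concentric cylinders $Q_r\subset Q_\rho$, $R\le r<\rho\le 2R$. For $\lambda$ above a threshold $\lambda_0$ depending on $c_\mu$, $A$, on $\vint_{Q_\rho}g^2$ and on $\rho-r$, Calderón--Zygmund-decompose $g^2$ at level $\lambda$ inside $Q_r$: this yields dyadic-type cubes $\{Q^i\}$ with $\lambda<\vint_{Q^i}g^2\le c_\nu\lambda$, covering $\{g^2>\lambda\}\cap Q_r$ up to a null set, with $\{AQ^i\}$ of bounded overlap, $AQ^i\subset Q_\rho$ (the radii are forced small because $\nu(Q^i)\le\lambda^{-1}\int_{Q^i}g^2\le\lambda^{-1}\nu(Q_\rho)$ while $\nu(Q^i)/\nu(Q_\rho)\ge c(\rho_i/\rho)^{s+2}$), and $\vint_{AQ^i}g^2\le c(c_\mu,A)\lambda$ (each $AQ^i$ is covered by boundedly many same-generation cubes whose averages the stopping rule caps at $\lambda$). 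On each $Q^i$ apply \eqref{gekaehto} to the pair $Q^i\subset AQ^i$: the term $\eps\vint_{AQ^i}g^2\le c\eps\lambda$ is absorbed into $\vint_{Q^i}g^2>\lambda$ once $\eps$ is small depending only on $c_\mu,A$; in $\gamma(\vint_{AQ^i}g^q)^{2/q}$ I split $g=g\chi_{\{g\le\sigma\lambda^{1/2}\}}+g\chi_{\{g>\sigma\lambda^{1/2}\}}$, absorb the low part after fixing $\sigma=\sigma(c_\mu,A,\gamma,q)$ small, and on the high part use $\vint_{AQ^i}g^q\le(\vint_{AQ^i}g^2)^{q/2}\le(c\lambda)^{q/2}$ together with the super-additive exponent $2/q>1$ to pull out a factor $(c\lambda)^{1-q/2}$. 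Multiplying by $\nu(Q^i)$ and summing over $i$ (using $\nu(Q^i)\le\nu(AQ^i)\le c_\nu\nu(Q^i)$ and the bounded overlap), then writing $k=\lambda^{1/2}$, I arrive at
\[
\int_{\{g>k\}\cap Q_r}g^2\,d\nu\le c\,\gamma\,k^{2-q}\int_{\{g>\sigma k\}\cap Q_\rho}g^q\,d\nu\qquad\text{for }k\ge k_0:=\lambda_0^{1/2},
\]
with $c=c(c_\mu,A)$.

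Finally, the Gehring iteration. To make the following absorptions rigorous I would first replace $g$ by $g_N:=\min(g,N)$, which satisfies the same good-$k$ inequality, and recover the statement for $g$ by monotone convergence as $N\to\infty$. Multiplying the good-$k$ inequality by $\delta\,k^{\delta-1}$ and integrating over $k\in(k_0,\infty)$, Fubini (Cavalieri's principle) turns the left side into $\int_{\{g>k_0\}\cap Q_r}(g^{2+\delta}-k_0^{\delta}g^2)\,d\nu$, and --- since $q<2$ makes the inner $k$-exponent $>-1$ and $g^q\cdot g^{\delta+2-q}=g^{2+\delta}$ --- the right side into at most $C_*\,\delta\int_{Q_\rho}g^{2+\delta}\,d\nu$ with $C_*=C_*(c_\mu,A,\gamma,q)$. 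Hence
\[
\int_{Q_r}g^{2+\delta}\,d\nu\le C_*\,\delta\int_{Q_\rho}g^{2+\delta}\,d\nu+\frac{c}{(\rho-r)^{(s+2)/2}}\Big(\vint_{Q_{2R}}g^2\,d\nu\Big)^{\delta/2}\int_{Q_{2R}}g^2\,d\nu.
\]
Choosing $\eps_0=\eps_0(c_\mu,A,\gamma,q)$ so small that $C_*\eps_0\le\tfrac12$, for $0<\delta\le\eps_0$ the factor $C_*\delta<1$, so feeding $\phi(r)=\int_{Q_r}g_N^{2+\delta}\,d\nu$ into a standard iteration lemma (the hole-filling device already used for \lemref{caccioppoli near the initial boundary}) gives $\int_{Q_R}g_N^{2+\delta}\,d\nu\le c\,(\vint_{Q_{2R}}g^2\,d\nu)^{\delta/2}\int_{Q_{2R}}g^2\,d\nu$ uniformly in $N$; letting $N\to\infty$ removes the truncation. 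Dividing by $\nu(Q_R)$ (comparable to $\nu(Q_{2R})$ by doubling) and raising to the power $1/(2+\delta)$ gives the claim with $\eps=\delta$.

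\textbf{Main obstacle.}
The hard step is the covering: carrying out the Calderón--Zygmund/Whitney decomposition in a bare doubling metric measure space so that the dilated cubes $AQ^i$ simultaneously fit inside $Q_\rho$ \emph{and} keep the average bound $\vint_{AQ^i}g^2\le c\lambda$, with bounded overlap; this rests on Christ's dyadic cubes for $(X\times\R,d_P,\nu)$ and on the doubling-exponent estimate \eqref{eq:DoublingConsequence}. Intertwined with it is the taming of the super-additive power $2/q>1$ in the reverse-Hölder term --- the characteristic difficulty of every Gehring argument --- which the split at height $\sigma\lambda^{1/2}$ together with the stopping-time average bound is exactly designed to handle. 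Everything else is bookkeeping, but the \emph{order} of the choices ($\eps$ small first, then $\gamma=\gamma(\eps)$, then $\sigma$ and $\lambda_0$, finally $\eps_0$) must be respected so that $c$ and $\eps_0$ depend only on $c_\mu,A,\gamma,q$, and the truncation $g\wedge N$ is what legitimizes the absorption in the last step. The range $2s/(2+s)<q<2$ serves only to keep these exponents admissible --- $q<2$ gives convergence of the inner integral and super-additivity of $2/q$ --- its lower bound being precisely the one under which \eqref{gekaehto} is available from the preceding reverse Hölder inequality.
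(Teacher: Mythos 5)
Your overall architecture (a good-$\lambda$ estimate for the level sets, Cavalieri/Fubini, truncation to legitimize the absorption, fixing $\eps$ before invoking $\gamma=\gamma(\eps)$) is the standard Gehring scheme and is close in spirit to what the paper does. The genuine gap is in the covering step that you yourself identify as the crux. You decompose $\{g^2>\lambda\}\cap Q_r$ into stopped Christ cubes $Q^i$ and claim $\vint_{AQ^i}g^2\le c(c_\mu,A)\lambda$ because ``each $AQ^i$ is covered by boundedly many same-generation cubes whose averages the stopping rule caps at $\lambda$.'' This is not what the stopping rule gives: maximality of $Q^i$ controls only the averages of the dyadic \emph{ancestors} of $Q^i$, i.e.\ the cubes in the chain through $Q^i$. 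A neighbouring cube of the same generation as the parent of $Q^i$ may itself be a stopped cube, or may be contained in a stopped cube many generations coarser, and in either case its own average at that generation can be arbitrarily large (all the mass of the coarse stopped cube may concentrate in it); no bound $c\lambda$ follows. Since both your absorption of the $\eps$-term in \eqref{gekaehto} and your treatment of the superadditive exponent $2/q$ (the split at height $\sigma\lambda^{1/2}$ uses $\vint_{AQ^i}g^q\le(c\lambda)^{q/2}$) rest on this missing upper bound over the \emph{dilated} sets, the good-$\lambda$ inequality is not established as written. This is precisely the difficulty created by a reverse H\"older hypothesis with enlarged cylinders, as opposed to the classical dyadic Gehring lemma where the same cube appears on both sides.

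The paper's proof secures the two-sided control by construction rather than by dyadic structure: it weights $g$ by the boundary distance, setting $h=\alpha^{-1/2}g$ with $\alpha(z)=\nu(Q_{2R})/\nu(Q_{r(z)/(5A)}(z))$, and for $\nu$-a.e.\ point $z'$ of the level set $G(\beta)$ it chooses a stopping radius $\rho(z')<r(z')/(5A)$ such that $\vint_{Q_{A\rho(z')}}g^2\,d\nu\le\alpha(z')\beta^2\le\vint_{Q_{\rho(z')}}g^2\,d\nu$ (Lebesgue differentiation forces large averages at small radii, while all radii in $[r(z')/(5A),r(z')]$ have averages below $\alpha(z')\beta^2$); a Vitali $5r$-covering of $G(\beta)$ by the cylinders $Q_{A\rho(z_i')}(z_i')$ then replaces dyadic disjointness, the factor $5A$ in $\alpha$ guaranteeing $Q_{5A\rho(z')}(z')\subset Q_{2R}$. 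The weight $\alpha$ also removes the need for your final $(r,\rho)$ hole-filling iteration: the resulting estimate holds directly on $Q_R$ versus $Q_{2R}$ because $1\le\alpha\le c(c_\mu,A)$ on $Q_R$. If you replace your Christ-cube Calder\'on--Zygmund step by this pointwise stopping-radius selection (or any selection giving the two-sided bound on the pair $Q_{\rho}\subset Q_{A\rho}$), the rest of your argument goes through essentially as you wrote it.
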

%

\begin{proof}
Assume a parabolic cylinder $Q_{2R}$ with center point $z_0=(x_0,t_0)$ such that $Q_{2R}(z_0)\subset \Omega_T$. Define for every $z_1=(x_1,x_2)$, $z_2=(x_2,t_2)\in X\times \R$ the parabolic distance
\begin{align*}
 \textrm{dist}_p(z_1,z_2)=d(x_1,x_2)+|t_1-t_2|^{1/2}.
\end{align*}
Using this, set for every $z\in Q_{2R}$ the functions
\begin{align*}
r(z)&=\,\textrm{dist}_p(z,(X\times \R)\setminus Q_{2R}),\\
 \alpha(z)&=\frac{\nu(Q_{2R})}{\nu(Q_{\frac{r(z)}{5A}}(z))}.
\end{align*}
From the definition of $r(z)$ it can readily be checked that $Q_{r(z)}(z)\subset Q_{2R}$ for every $z\in Q_{2R}$.
For $z\in Q_{2R}$, define
\begin{align*}
h(z)=\alpha^{-1/2}(z)g(z),
\end{align*}  
and for every $\beta>0$, set
\begin{align*}
G(\beta)=\{\,z\in Q_{2R}\cap \Omega_T\,:\,h(z)>\beta\,\}.
\end{align*}
Denote
\begin{align*}
\beta_0=\left( \vint_{Q_{2R}} g^2\,d\nu \right)^{1/2}.
\end{align*}
Assume $\beta>\beta_0$. For $\nu$-almost every $z'\in G(\beta)$, we have for every \\$r\in [r(z')/(5A),r(z')]$, 
\begin{align}\label{larger than}
\vint_{Q_r(z')\cap \Omega_T}g^2\,d\nu \leq \alpha(z') \vint_{Q_{2R}}g^2\,d\nu \leq \alpha(z') \beta^2, 
\end{align}
and by the definition of $G(\beta)$, since $\mu$ is a positive Borel measure,
\begin{align}\label{less than}
\lim_{r\rightarrow 0} \vint_{Q_r(z')} g^2\, d\nu=g^2(z')>\alpha(z') \beta^2.
\end{align}
Now \eqref{larger than} and \eqref{less than} imply that for $\nu$-almost every $z'\in G(\beta)$, there exists a corresponding radius $\rho(z')\in (0,r(z')/(5A))$, for which it holds 
\begin{equation}\label{comparability of balls for g}
\begin{split}
\vint_{Q_{A \rho(z')}(z')} g^2\, d\nu\leq \alpha(z') \beta^2\leq \vint_{Q_{\rho(z')}(z')} g^2\, d\nu. 
\end{split}
\end{equation}
Thus by choosing $\eps=1/2$ in \eqref{gekaehto}, we can absorb the first term on the right hand side of \eqref{gekaehto} into the left hand side and obtain
\begin{align*}
&\vint_{Q_{\rho(z')}(z')} g^2 \, d\nu \leq  \left( c\vint_{Q_{A\rho(z')}(z')}g^q\, d\nu \right)^{2/q},
\end{align*}
for $\nu$-almost every $z'\in G(\beta)$,  where $c=c(\gamma)$. This together with \eqref{comparability of balls for g} yields
\begin{equation}\label{higherint of g when centered in G}
\begin{split}
&\vint_{Q_{5A\rho(z')}(z')} g^2 \, d\nu \leq \left( c\vint_{Q_{A\rho(z')}(z')}g^q\,d\nu \right)^{2/q},
\end{split}
\end{equation}
where $c=c(A,c_\mu,\gamma)$. From the definitions of a parabolic cylinder and the parabolic distance, it follows that
\begin{align*}
2^{-1/2} r(z')\leq r(z) \leq 2 r(z') \qquad \textrm{for every } z\in Q_{r(z')}(z'),\; z'\in Q_{2R}.
\end{align*}
From this it is straightforward to check that
\begin{align*}
\begin{array}{c}Q_{r(z)}(z)\subset Q_{3r(z')}(z'),\\ Q_{r(z')}(z')\subset Q_{4r(z)}(z)\end{array} \quad \textrm{ for every }  z\in Q_{r(z')}(z'),\; z'\in Q_{2R},
\end{align*} 
and so by the doubling property of the measure there exists positive constants $c=c(c_\mu)$, $c'=c'(c_\mu)$ such that
\begin{align}\label{local comparability of alpha}
c\alpha(z')\leq \alpha(z) \leq c' \alpha (z) \qquad \textrm{ for every }z\in Q_{r(z')}(z'),\; z'\in Q_{2R}.
\end{align}
Because of this, we see from \eqref{higherint of g when centered in G} that there exists a positive constant $c=c(A,c_\mu,\gamma)$, such that for $\nu$-almost every $z'\in G(\beta)$, after also using the fact that $\alpha(z)\geq 1$,
\begin{align}\label{higherint of h when centered in G}
\begin{split}
&\vint_{Q_{5A \rho(z')}(z')} h^2 \, d\nu\leq \left( \vint_{Q_{A\rho(z')}(z')}h^q\,d\nu \right)^{2/q}.
\end{split}
\end{align}
On the other hand, by  Hölder's inequality since $1<q<2$, and then by \eqref{local comparability of alpha}, we obtain from \eqref{comparability of balls for g}, 
\begin{align}\label{q boundedness of h}
\begin{split}
&\left( \vint_{Q_{A\rho(z')}(z')}h^q\, d\nu \right)^{(2-q)/q}\leq \left( \vint_{Q_{A\rho(z')}(z')}h^2\, d\nu \right)^{(2-q)/2}\leq c\beta^{2-q},
\end{split}
\end{align}
where $c=c(c_\mu)$. Assume now any $\delta>0$. By \eqref{higherint of h when centered in G} and by the definition of $G(\delta\beta)$, we have for $\nu$-almost every $z'\in G(\beta)$,
\begin{align*}
\vint_{Q_{5A\rho(z')}(z')} h^2\, d\nu &\leq c  \delta^2 \beta^2+\left(\frac{c}{\nu(Q_{A\rho(z')}(z'))} \int_{Q_{A\rho(z')}(z')\cap G(\delta \beta)}h^q\,d\nu \right)^{2/q}.
\end{align*}
By \eqref{local comparability of alpha} and \eqref{comparability of balls for g}, we can now choose a small enough positive number $\delta(c_\mu, A, \gamma)<1$ to absorb the first term on the right hand side into the left hand side. We obtain a positive $c=c(A,c_\mu,\gamma)$, such that for $\nu$-almost every $z'\in G(\beta)$ and any $\beta>\beta_0$, after using \eqref{q boundedness of h},
\begin{equation}\label{higherint2 of h}
\begin{split}
&\vint_{Q_{5A\rho(z')}(z')} h^2\, d\nu 
\leq  \beta^{2-q} \frac{c}{\nu(Q_{A \rho(z')}(z'))} \int_{Q_{A\rho(z')}(z')\cap G(\delta \beta)}h^q\,d\nu.
\end{split} 
\end{equation} 

The collection $\{\,Q_{A\rho(z')}(z')\,:\,z'\in G(\beta)\,\}$ is now an open cover of $G(\beta)$. By the Vitali covering lemma, there exists a countable and pairwise disjoint subcollection $\{\,Q_{A\rho(z_i')}(z_i')\,:\,z_i'\in G(\beta)\,\}_{i=1}^\infty$, such that 
\begin{align*}
G(\beta)\subset \bigcup_{i=1}^\infty Q_{5 A \rho(z_i')}(z_i')\subset Q_{2R}.
\end{align*}
The last inclusion follows from the fact that $5A \rho(z)\leq r(z)$. This property is the reason why we introduced the number $5$ into the proof earlier. Now we can write for any $\beta>\beta_0$, after multiplying inequality \eqref{higherint2 of h} with $\nu(Q_{A\rho(z')}(z'))$ and using the doubling property of $\mu$,
\begin{equation}\label{higherint2 of h at lower levels}
\begin{split}
\int_{G(\beta)}&h^2\,d\nu\leq \sum_{i=1}^\infty \int_{Q_{5A\rho(z_i')}(z_i')} h^2\, d\nu\\&\leq  \sum_{i=1}^\infty c \beta^{2-q}\int_{Q_{A\rho(z_i')}(z_i')\cap G(\delta \beta)}h^q\,d\nu\leq c\beta^{2-q}\int_{G(\delta \beta)} h^q\, d\nu.
\end{split}
\end{equation}
 From now on the higher integrability result is a consequence of \eqref{higherint2 of h at lower levels} and Fubini's theorem. To see this, we integrate
over $G(\beta_0)$ and use Fubini's theorem to obtain
\begin{equation*}
\begin{split}
\int_{G(\beta_0)}&h^{2+\eps} \, d \nu=
\int_{G(\beta_0)}\l(\int_{\beta_0}^{h} \eps \beta^{\eps-1} \,d
\beta\,+(\beta_0)^{\eps}\r)  h^2 \,d\nu
\\
&\hspace{-2 em}=
  \int_{\beta_0}^{\infty}
\eps \beta^{\eps-1}\int_{G(\beta)}h^2  \,d \nu \,d \beta\, +
(\beta_0)^{\eps} \int_{G(\beta_0)} h^2 \,d \nu,
\end{split}
\end{equation*}
and now by \eqref{higherint2 of h at lower levels}
\begin{align*}
 \int_{\beta_0}^{\infty}
 \eps\beta^{\eps-1}\int_{G(\beta)}h^2  \,d \nu \,d \beta\, \leq  c\int_{\beta_0}^{\infty} \varepsilon \beta^{\varepsilon+1-q} \int_{G(\delta \beta)}h^q\,d\nu\, d\beta.
\end{align*}
By Fubini's
theorem again, we see that
\begin{align*}
 &\int_{\beta_0}^{\infty} \eps
 \beta^{\eps+1-{q} } \int_{G(\delta \beta)} h^{{q}} \,d \nu\,d \beta\, +
\beta_0^{\eps} \int_{G(\beta_0)} h^2 \,d \nu
\\&\quad= 
 \eps\int_{G(\delta \beta_0)} \l(\int_{\beta_0}^{h/\delta}
 \beta^{\eps-1+2-{q} } \,d \beta\r) h^{{q}} \,d \nu\, +
\beta_0^{\eps} \int_{G(\beta_0)} h^2 \,d \nu\\&\quad\leq
\frac{\eps}{\delta^{2+\varepsilon-q}(\eps+2-{q}) } \int_{G( \beta_0)}
 h^{\eps+2} \,d \nu+
\beta_0^{\eps} \int_{G(\delta\beta_0)} h^2 \,d \nu,
\end{align*}
where $c=c(A,c_\mu,\gamma)$.  Observe that in the last step we also used the fact that $h^{\eps+2}\leq
  \beta_0^{\eps} h^2$  in $G(\delta
  \beta_0)\setminus G(\beta_0)$. We can now choose a positive $\eps=\eps(c_\mu, A, \gamma, q)$ small enough
to absorb the term containing $h^{2+\eps}$ into the left hand side of \eqref{higherint2 of h at lower levels}, and
conclude that
\begin{equation}
\label{melkein}
\begin{split}
&\int_{G(\beta_0)} h^{2+\eps} \,d \nu  \leq c(\beta_0)^{\eps}
\int_{G(\delta\beta_0)} h^2 \,d \nu,
\end{split}
\end{equation}
where $c=(c_\mu, A,\gamma)$. In case the term containing $h^{2+\varepsilon}$ is infinite, we replace $h$ by $h_k=\min\{h,k\}$ where $k>\beta$. Starting from \eqref{higherint2 of h at lower levels} we estimate that
\begin{equation}\label{higherint for cutoff}
\begin{split}
\int_{\{h_k> \beta \}} h_k^{2-q} \, d\zeta & \leq c \beta^{2-q} \int_{\{h_k> \delta \beta \}} d \zeta.
\end{split}
\end{equation} 
where $d\zeta= h^q \, d\nu$. Performing now as above the calculations involving Fubini's theorem yields
\begin{align*}
\int_{\{h_k> \beta_0 \}} &h_k^{2+\varepsilon-q} \, d\zeta \leq \varepsilon c \int_{\{h_k> \beta_0 \}} h_k^{2+\varepsilon-q} \, d\zeta +\beta_0^\varepsilon \int_{\{h_k> \delta \beta_0\}}h_k^{2-q}\, d\zeta.
\end{align*}
Now we can absorb the term containing $h_k^{2+\varepsilon-q}$ into the left hand side side, and finally let $k\rightarrow \infty$ to obtain \eqref{melkein}.

Finally, from the definitions of the parabolic distance and the parabolic cylinder, it is again straighforward to check that $Q_R\subset Q_{4 r(z)}(z)$ for every $z\in Q_R$. Hence, by the doubling property of the measure,
\begin{align*}\label{boundedness of alpha}
\alpha(z) \leq \frac{\nu(Q_{2R})}{\nu(Q_R)}\frac{\nu(Q_{4 r(z)}(z))}{\nu(Q_{\frac{r(z)}{5A}}(z))}\leq  c_1, \qquad \textrm{for every} \,z\in Q_R, 
\end{align*}
where $c_1=c_1(c_\mu,A)>0$. On the other hand, clearly $\alpha(z)\geq 1$ for every $z\in Q_{2R}$. Now \eqref{melkein} and the definition of $\beta_0$ imply that
\begin{align*}
\int_{Q_R} g^{2+\varepsilon} \, d\nu &\leq c_1^{\frac{2+\varepsilon}{2}}\left((\beta_0)^\eps\int_{Q_R\setminus G(\beta_0)} h^2 \, d\nu+\int_{G(\beta_0)} h^{2+\eps} \, d\nu \right)\\
&\leq c\frac{1}{(\nu(Q_{2R}))^{\varepsilon/2}}\left(\int_{Q_{2R}} g^2 \, d\nu \right)^{\frac{2+\eps}{2}},
\end{align*}
 where $c=c(c_\mu, A,\gamma)>0$. From this expression the proof can readily be completed.
\end{proof}
We conclude this paper by stating and proving the main result and a corollary.
\begin{theorem}[Local higher integrability]\label{local higher integrability}
Let $u\in L_\textrm{loc}^2(0,T;N_\textrm{loc}^{1,2}(\Omega))$ be a parabolic $K$-quasiminimizer.
Then there exists positive constants $\varepsilon=\eps(c_\mu, c_P,\lambda)$ and $c=c(c_\mu,c_P,\lambda,K)$, so that for every $z_0$ and $R$ such that $Q_{2R}(z_0)\subset \Omega_T$, we have
\[
\begin{split}
&\left(\vint_{Q_{R}(z)} g_u^{2+\eps} \,d \nu\right)^\frac{1}{2+\eps}\leq  c\l(\vint_{Q_{2R}(z)} g_u^2 \,d \nu\r)^{\frac{1}{2}}.
\end{split}
\]

\begin{proof} Let $z$ and $R$ be such that $Q_{2R}(z)\subset \Omega_T$.
By Lemma \ref{reverse hölder away from boundary} there exists  a constant $c=c(c_\mu,c_P,\lambda, K)$, and a $1<q<2$, such that
 \begin{align*}
 &\vint_{Q_{\rho}} g_u^2 \, d \nu\leq \varepsilon c\vint_{\q{2\lambda\rho}} g_u^2 \,d \nu+2\varepsilon^{-1}c\left(\vint_{Q_{2\lambda \rho}} g_u^q\,d\nu\right)^\frac{2}{q}.
 \end{align*}
Theorem \ref{global gehring} with $A=2\lambda$ now completes the proof.
\end{proof}
\end{theorem}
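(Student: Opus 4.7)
The plan is to recognize that almost all of the work has already been done, and the theorem is an immediate combination of the reverse H\"older inequality proved in \lemref{reverse h�lder away from boundary} with the abstract Gehring-type self-improvement result \theoref{global gehring}. So my proof proposal is: fix $z_0$ and $R$ with $Q_{2R}(z_0) \subset \Omega_T$, set $g = g_u \in L^2_{\textrm{loc}}(\Omega_T)$, and check that the hypothesis \eqref{gekaehto} of \theoref{global gehring} holds with $A = 2\lambda$ on every subcylinder $Q_{A\rho}(z') \subset Q_{2R}(z_0)$.

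First, I would invoke \lemref{reverse h�lder away from boundary} directly: it gives a constant $c = c(c_\mu, c_P, \lambda, K)$ and an exponent $1 < q < 2$ such that, for any $Q_{A\rho}(z') = Q_{2\lambda\rho}(z') \subset Q_{2R}(z_0) \subset \Omega_T$ and any $\varepsilon > 0$,
\[
\vint_{Q_\rho(z')} g_u^2 \, d\nu \leq \varepsilon c \vint_{Q_{2\lambda\rho}(z')} g_u^2 \, d\nu + 2\varepsilon^{-1} c \left( \vint_{Q_{2\lambda\rho}(z')} g_u^q \, d\nu \right)^{2/q}.
\]
This is exactly \eqref{gekaehto} with $g = g_u$, the constant $\gamma$ being $2\varepsilon^{-1} c$ once the first $\varepsilon$ on the right hand side of \eqref{gekaehto} has been identified with $\varepsilon c$.

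Second, I would need to verify the compatibility of exponents, namely that the $q$ produced by the preceding analysis lies in the range $2s/(2+s) < q < 2$ required by \theoref{global gehring}. This is where I expect the only genuine checking to occur. The exponent $q$ in \lemref{reverse h�lder away from boundary} arises from combining the Keith--Zhong self-improvement of the weak $(1,2)$-Poincar\'e inequality (which yields a weak $(1,q_0)$-Poincar\'e inequality for some $q_0 < 2$, see \remref{poincare_remark}) with the Hajlasz--Koskela Sobolev-type upgrade to a $(2, q_0)$-Poincar\'e inequality. The latter requires precisely $q_0 \geq 2s/(s+2)$ when $q_0 < s$, and since $q_0$ may be chosen arbitrarily close to $2$ from below, the inequality $2s/(2+s) < q < 2$ is automatic. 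This point is implicit in \lemref{reverse h�lder away from boundary} but worth recording here because it is the only non-cosmetic hypothesis of \theoref{global gehring} that needs to be checked.

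Third, with the hypotheses verified, I would apply \theoref{global gehring} to $g = g_u$ on $Q_{2R}(z_0)$. The conclusion is exactly the claimed reverse H\"older estimate at exponents $2 + \varepsilon$ and $2$, with $\varepsilon = \varepsilon(c_\mu, A, \gamma, q)$ and $c = c(c_\mu, A, \gamma)$. Unraveling the dependencies $A = 2\lambda$, $\gamma = \gamma(c_\mu, c_P, \lambda, K)$, and $q = q(c_\mu, c_P)$ gives $\varepsilon = \varepsilon(c_\mu, c_P, \lambda)$ and $c = c(c_\mu, c_P, \lambda, K)$, matching the statement. The only mild obstacle is ensuring that the smaller cylinders in \eqref{gekaehto} are constrained to lie inside $Q_{2R}(z_0)$ rather than merely inside $\Omega_T$; this is automatic because the reverse H\"older estimate from \lemref{reverse h�lder away from boundary} only demands $Q_{2\lambda\rho}(z') \subset \Omega_T$, which is implied by $Q_{2\lambda\rho}(z') \subset Q_{2R}(z_0) \subset \Omega_T$. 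No further work is needed.
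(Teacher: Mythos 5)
Your proposal is correct and follows essentially the same route as the paper: the paper's proof likewise just invokes Lemma \ref{reverse h�lder away from boundary} to verify hypothesis \eqref{gekaehto} and then applies Theorem \ref{global gehring} with $A=2\lambda$. Your extra check that the exponent $q$ can be taken in the range $2s/(2+s)<q<2$ (since the weak $(1,q)$-Poincar\'e inequality from Remark \ref{poincare_remark} persists for all larger exponents below $2$, so $q$ may be pushed close enough to $2$ for the Sobolev-type $(2,q)$-Poincar\'e inequality to hold) is a point the paper leaves implicit, and it is correctly resolved.
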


\begin{corollary}
Let  $u\in L_\textrm{loc}^2(0,T;N_\textrm{loc}^{1,2}(\Omega))$ be a parabolic $K$-quasiminimizer.  Then there exists a positive constant $\varepsilon=\eps(c_\mu, c_P,\lambda)$, such that for any compact $F\subset \Omega_T$, we have
\begin{align*}
\left(\int_{F} g_u^{2+\eps} \,d \nu\right)^\frac{1}{2+\eps}<\infty.
\end{align*}
\begin{proof}
Since $F$ is compact, there exists a finite collection of points $z_i\in F$ and parabolic  cylinders $Q_{R_i}(z_i)$ such that $Q_{2R_i}(z_i)\subset \Omega_T$ and
\begin{align*}
F\subset \bigcup_{i=1}^n Q_{R_i}(z_i).
\end{align*}
By Theorem \ref{local higher integrability}, we have
\begin{align*}
&\int_{F} g_u^{2+\eps} \,d \nu\leq \max_{1\leq i \leq n} \nu(Q_{R_i}(z_i))\sum_{i=1}^n\vint_{Q_{R_i}(z_i)} g_u^{2+\eps} \,d \nu\\
&\leq c\max_{1\leq i \leq n} \nu(Q_{R_i}(z_i)) \sum_{i=1}^n \l(\vint_{Q_{2R_i}(z_i)} g_u^2 \,d \nu\r)^{\frac{2+\varepsilon}{2}}<\infty.
\end{align*}
\end{proof}
\end{corollary}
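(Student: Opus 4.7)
The plan is to reduce the global statement on the compact set $F$ to finitely many applications of the local higher integrability estimate from Theorem \ref{local higher integrability}, exploiting compactness in a routine way.

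First I would observe that since $F \subset \Omega_T$ is compact and $\Omega_T$ is open, there is a positive parabolic distance from $F$ to $(X\times \R)\setminus \Omega_T$. Fix any such $z \in F$; then I can choose $R_z > 0$ small enough that the doubled cylinder $Q_{2R_z}(z)$ still lies in $\Omega_T$. The collection $\{Q_{R_z}(z)\}_{z \in F}$ is an open cover of $F$, so by compactness I can extract a finite subcover $\{Q_{R_i}(z_i)\}_{i=1}^n$ with $z_i \in F$ and $Q_{2R_i}(z_i) \subset \Omega_T$ for each $i$.

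Next I would apply Theorem \ref{local higher integrability} on each cylinder: there is a single $\varepsilon = \varepsilon(c_\mu, c_P, \lambda) > 0$ (independent of $i$, since it depends only on the structural constants of the space, not the cylinder) and a constant $c = c(c_\mu, c_P, \lambda, K)$ such that
\begin{equation*}
\vint_{Q_{R_i}(z_i)} g_u^{2+\varepsilon}\, d\nu \le c \left( \vint_{Q_{2R_i}(z_i)} g_u^2\, d\nu \right)^{(2+\varepsilon)/2}.
\end{equation*}
Because $u \in L^2_{\mathrm{loc}}(0,T; N^{1,2}_{\mathrm{loc}}(\Omega))$ and $Q_{2R_i}(z_i) \subset\subset \Omega_T$, each integral $\int_{Q_{2R_i}(z_i)} g_u^2\, d\nu$ is finite.

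Finally I would bound $\int_F g_u^{2+\varepsilon}\, d\nu$ by the sum over the subcover:
\begin{equation*}
\int_F g_u^{2+\varepsilon}\, d\nu \le \sum_{i=1}^n \int_{Q_{R_i}(z_i)} g_u^{2+\varepsilon}\, d\nu \le \max_{1\le i \le n} \nu(Q_{R_i}(z_i)) \sum_{i=1}^n c \left( \vint_{Q_{2R_i}(z_i)} g_u^2\, d\nu \right)^{(2+\varepsilon)/2},
\end{equation*}
which is a finite sum of finite terms, hence finite. Taking the $(2+\varepsilon)$-th root gives the claim. There is no real obstacle here: the only mildly delicate point is making sure the same $\varepsilon$ works uniformly over all cylinders, but this is immediate from the statement of Theorem \ref{local higher integrability} since $\varepsilon$ depends only on $c_\mu, c_P, \lambda$.
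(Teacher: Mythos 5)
Your proposal is correct and follows essentially the same route as the paper: a finite cover of the compact set $F$ by cylinders $Q_{R_i}(z_i)$ with $Q_{2R_i}(z_i)\subset\Omega_T$, followed by applying Theorem \ref{local higher integrability} on each cylinder and summing. The extra remarks on the uniformity of $\varepsilon$ and the finiteness of each $\int_{Q_{2R_i}(z_i)} g_u^2\,d\nu$ are accurate but the argument is the same.
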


\def\cprime{$'$} \def\cprime{$'$}

\end{document}